\documentclass[a4paper,12pt]{article}
\usepackage[cp1251]{inputenc}
\usepackage[russian, ukrainian,english]{babel}
\usepackage{amsmath, amsthm, amsfonts, amssymb}
\usepackage{graphicx}

\textwidth=16cm
\textheight=23cm
\hoffset=-0.8cm
\voffset=-1.3cm


\theoremstyle{plain}
\newtheorem{theorem}{Theorem}[section]

\newtheorem{lemma}{Lemma}[section]
\newtheorem{corollary}{Corollary}[section]

\begin{document}

\begin{center}

\begin{center}
		{\bf On a Brownian motion conditioned to stay in an open set}
		
		\vskip20pt

		Georgii V. Riabov
		
		\vskip20pt	
		
		Institute of Mathematics, NAS of Ukraine

			\end{center}
		
		\vskip20pt

 \begin{abstract}
 Distribution of a Brownian motion conditioned to start from the boundary of an open set $G$ and to stay in $G$ for a finite period of time is studied. Characterizations of such distributions in terms of certain singular stochastic differential equations are obtained. Results are applied to the study of boundaries of clusters in some coalescing stochastic flows on $\mathbb{R}.$
\end{abstract}

\begin{quote} {\footnotesize
    \underline{Keywords}: Brownian motion, Brownian meander, stochastic flow, coalescence.

\underline{AMS subject classification (2020)}: 60J55, 60J57, 60H05}

\end{quote}

\end{center}

\section{Introduction}

Let $B=\{B(t)\}_{t\in [0,T]}$ be a standard $\mathbb{R}^d-$valued Brownian motion. Given an open set $G\subset \mathbb{R}^d$ denote by $\tau_G=\inf\{t>0: B(t)\not\in G\}$ the first exit time of $B$ from the set $G.$ In this paper we study the distribution of $B$ conditioned on the event $\{\tau_G>T\},$ where $T>0$ is a fixed positive time. Denote this distribution by $\nu_{x,T}(\cdot;G),$ where $B(0)=x$ is the starting point. Let $\mathcal{C}^d_T$ be the space of continuous functions $w:[0,T]\to \mathbb{R}^d$ endowed with the sup-norm and a Borelian $\sigma-$field $\mathcal{B}(\mathcal{C}^d_T).$ Then 
$$
\nu_{x,T}(\Delta;G)=\mathbb{P}(B\in \Delta|B(0)=x, \tau_G>T), \ \Delta\in\mathcal{B}(\mathcal{C}^d_T).
$$
The measure $\nu_{x,T}$ is not well-defined when $x\not\in G,$ as the event $\{B(0)=x,\tau_G>T\}$ can be of probability zero. However, if the set $G$ is sufficiently regular and $x$ is a boundary point of $G,$ the measure $\nu_{x,T}$ is well-defined as a weak limit \cite[Th. 4.1]{Garbit}
$$
\nu_{x,T}(\cdot;G)=\lim_{y\to x, y\in G}\nu_{y,T}(\cdot;G).
$$
In the paper we characterize the measure $\nu_{x,T}(\cdot;G)$ in terms of a singular SDE. Precisely, introduce the function
\begin{equation}
\label{28_08_eq3}
\gamma_G(t,y)=\mathbb{P}(\tau_G>t|B(0)=y), \ t>0, y\in G,
\end{equation}
and consider the following problem

\begin{equation}
\label{eq1}
\begin{cases}
dY(t)=\nabla_y\log\gamma_G(T-t,Y(t))dt+dW(t), \\
Y(0)=x, \\
Y(t)\in G \mbox{ for a.a. } t\in (0,T),
\end{cases}
\end{equation}
where $W$ is a standard Brownian motion in $\mathbb{R}^d.$  The main result of the paper is the following.

\begin{theorem}
\label{thm1} Let $G\subset \mathbb{R}^d$ be an open convex set, $x\in \partial G,$ and the boundary of $G$ is $C^2$ in the neighborhood of $x.$ Then the problem \eqref{eq1} has a unique strong solution. The distribution of this solution coincides with $\nu_{x,T}(\cdot,G).$

\end{theorem}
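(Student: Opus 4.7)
For $y \in G$ the function $h(t,z)=\gamma_G(T-t,z)$ satisfies $\partial_t h+\tfrac{1}{2}\Delta h=0$ on $(0,T)\times G$ with $h|_{\partial G}=0$, so $h(t,B(t))\mathbf{1}_{t<\tau_G}$ is a nonnegative $\mathbb{P}_y$-martingale. Doob's $h$-transform identifies $\nu_{y,T}(\cdot;G)$ with the law of the time-inhomogeneous diffusion generated by $\tfrac{1}{2}\Delta+\nabla\log\gamma_G(T-t,\cdot)\cdot\nabla$, and local boundedness of the drift on compact subsets of $(0,T)\times G$ gives existence and pathwise uniqueness of the strong solution of \eqref{eq1} with $Y(0)=y$. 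For $x\in\partial G$ take $Y$ to be the canonical process on $\mathcal{C}^d_T$ under $\nu_{x,T}(\cdot;G)=\lim_{y\to x,\,y\in G}\nu_{y,T}(\cdot;G)$, which exists by \cite[Th.~4.1]{Garbit}. The strong Markov property of the conditioned Brownian motion shows that, conditionally on $Y(t_0)$, the restriction $Y|_{[t_0,T]}$ is the interior $h$-transform started from $Y(t_0)\in G$, so the previous step identifies $Y$ as a solution of \eqref{eq1} on every $[t_0,T]$ with $t_0>0$. To close the argument at $t=0$ it suffices to verify
\begin{equation*}
\int_0^T \bigl|\nabla\log\gamma_G(T-s,Y(s))\bigr|\,ds<\infty\quad\text{a.s.}
\end{equation*}

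\textbf{Integrability of the singular drift.} This is the technical core of the proof and is where the $C^2$ and convexity hypotheses are essential. Since $G$ lies on one side of every supporting hyperplane, $\gamma_G\le\gamma_H$, where $H$ is the open tangent half-space to $G$ at $x$; the $C^2$ regularity of $\partial G$ near $x$ provides a matching lower bound $\gamma_G(t,z)\ge c\,\gamma_{H'}(t,z)$ on a neighborhood of $x$ for an inward-shifted tangent half-space $H'$. Since $\gamma_H(t,z)=\mathrm{erf}(d(z,\partial H)/\sqrt{2t})$ is explicit, these bounds yield $|\nabla\log\gamma_G(T-s,z)|\lesssim 1/d(z,\partial G)$ near $(T,x)$. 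Parabolic scaling of $\nu_{y,T}$ around $x$, together with Garbit's convergence applied in the half-space, identifies the limiting law of $s^{-1/2}(Y(s)-x)$ as $s\downarrow 0$ with that of a Brownian meander in $H$ evaluated at time $1$; in particular $d(Y(s),\partial G)\asymp\sqrt{s}$, so $\int_0^\varepsilon d(Y(s),\partial G)^{-1}\,ds\lesssim \int_0^\varepsilon s^{-1/2}\,ds<\infty$.

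\textbf{Pathwise uniqueness and strong existence.} Convexity of $G$ implies log-concavity of $z\mapsto\gamma_G(t,z)$ on $G$ by Borell's theorem (Pr\'ekopa--Leindler applied to the log-concave Dirichlet heat kernel). Hence $z\mapsto\nabla\log\gamma_G(T-t,z)$ is a monotone decreasing map on $G$, and for any two solutions $Y_1,Y_2$ of \eqref{eq1} driven by the same Brownian motion $W$,
\begin{equation*}
\tfrac{d}{dt}|Y_1(t)-Y_2(t)|^2=2(Y_1-Y_2)\cdot\bigl(\nabla\log\gamma_G(T-t,Y_1)-\nabla\log\gamma_G(T-t,Y_2)\bigr)\le 0,
\end{equation*}
so pathwise uniqueness holds on $[0,T]$. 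Strong existence follows by approximating $x$ by $y_n\in G$ and taking the a.s.\ uniform limit of the strong solutions $Y^{y_n}$ driven by a common $W$, whose pairwise distances are non-increasing by the same monotonicity; the limit $Y$ is a strong solution starting from $x$, and its law equals $\nu_{x,T}(\cdot;G)$ by continuity of $y\mapsto\nu_{y,T}(\cdot;G)$ combined with Yamada--Watanabe. The principal obstacle is the second step: obtaining sharp enough two-sided asymptotics of $\gamma_G$ near $\partial G$ to render the singular drift integrable along the candidate path; everything else is a packaging of classical $h$-transform, Garbit's convergence, Borell's log-concavity and monotone SDE arguments.
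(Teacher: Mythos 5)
Your overall strategy differs from the paper's, and it has a genuine gap at precisely the step you flag as the principal obstacle. You attempt a direct $h$-transform together with a pathwise estimate of the singular drift at $s=0$. The paper instead writes $\gamma_G=\gamma_H\cdot\theta$ with $\theta(t,y)=\gamma_G(T-t,y)/\gamma_H(T-t,y)$ and performs a Girsanov change of measure from $\nu_{x,T}(\cdot;H)$ (the tangent half-space) to $\nu_{x,T}(\cdot;G)$ using the density $\mathbf{1}_{\tau_G>T}/\nu_{x,T}(\tau_G>T;H)$. Because $\theta$ is bounded and nondegenerate near $(0,x)$, the additional drift $\nabla\log\theta$ carries \emph{no singularity at $s=0$}, and the delicate analysis of the singular part $\nabla\log\gamma_H$ is confined to the one-dimensional meander, Theorem~\ref{thm3}. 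To justify that $\nu_{x,T}(\tau_G>T;H)>0$ the paper uses Corollary~\ref{cor3} (equivalence with the Bessel(3) law) together with Burdzy's theorem on excursions from smooth surfaces. Your proposal does not exploit this factorization and thus must control $\nabla\log\gamma_G$ itself near $\partial G$, which is harder.

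The specific gap: your line ``$d(Y(s),\partial G)\asymp\sqrt{s}$, so $\int_0^\varepsilon d(Y(s),\partial G)^{-1}\,ds\lesssim\int_0^\varepsilon s^{-1/2}\,ds<\infty$'' is not a valid deduction. Convergence in distribution of $s^{-1/2}(Y(s)-x)$ to a meander at time $1$ gives no pathwise lower bound of the form $d(Y(s),\partial G)\ge c\sqrt{s}$; indeed for the Bessel(3) process (to which $d(Y(s),\partial G)$ is locally comparable) one has $\liminf_{s\to 0} Z(s)/\sqrt{s}=0$ a.s.\ by Dvoretzky--Erd\H{o}s. The conclusion $\int_0^\varepsilon d(Y(s),\partial G)^{-1}ds<\infty$ a.s.\ is true, but the correct route is through first moments (as the paper does for the meander: $\partial_y\log\gamma_{\mathbb{R}_+}(t,y)\le 1/y$ together with $\mathbb{E}\,Z(s)^{-1}\asymp s^{-1/2}$ integrable), not through a pathwise comparison. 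Moreover, even granting the integrability, you still need to show that the limit $Y=\lim Y^{y_n}$ satisfies the equation \emph{with the drift evaluated along $Y$}, i.e.\ that $\int_0^t\nabla\log\gamma_G(T-s,Y^{y_n}(s))\,ds\to\int_0^t\nabla\log\gamma_G(T-s,Y(s))\,ds$; uniform convergence of paths does not give this, since the integrand is unbounded near $\partial G$. This is exactly where the paper deploys Scheff\'e's lemma and the uniform-integrability estimate (Corollary~\ref{cor1}, inequality \eqref{eq10_8_3}) in Theorem~\ref{thm3}, and your monotone-approximation step for strong existence would need an analogous argument that you do not supply. Your log-concavity uniqueness step matches the paper.
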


The result was motivated by the study of coalescing stochastic flows on the real line. By a coalescing stochastic flow on the real line we understand a family $\{\psi_{s,t}:-\infty<s\leq t<\infty\}$ of measurable random mappings of $\mathbb{R},$ such that:

\begin{enumerate}
	
\item For all $r\leq s\leq t,$ $x\in\mathbb{R},$  $\omega\in \Omega$ 
$$
\psi_{s,t}(\omega,\psi_{r,s}(\omega,x))=\psi_{r,t}(\omega,x)
$$
and $\psi_{s,s}(\omega,x)=x.$

\item For all $t_1\leq \ldots\leq t_n,$ $x_1,\ldots,x_m\in \mathbb{R}$ random vectors 
$$
(\psi_{t_1,t_2}(x_1),\ldots,\psi_{t_1,t_2}(x_m)),\ldots, (\psi_{t_{n-1},t_n}(x_1),\ldots,\psi_{t_{n-1},t_n}(x_m))
$$
are independent.

\item For all $s\leq t,$ $h\in \mathbb{R},$ $x_1,\ldots,x_m\in\mathbb{R}$ random vectors 
$$
(\psi_{s,t}(x_1),\ldots,\psi_{s,t}(x_m)) \mbox{ and } (\psi_{s+h,t+h}(x_1),\ldots,\psi_{s+h,t+h}(x_m))
$$
are equally distributed.

\item For all $s,x\in\mathbb{R},$ $\omega\in\Omega,$ functions
$$
t\to \psi_{s,t}(x,\omega), \ t\geq s
$$ 
are continuous.

\item There exist $x\ne y$ such that 
$$
\mathbb{P}(\exists t>0: \psi_{0,t}(x)=\psi_{0,t}(y))>0.
$$
	
\end{enumerate}

With a stochastic flow $\psi$ we associate the family of $\sigma-$fields
$$
\mathcal{F}^\psi_{s,t}=\sigma(\{\psi_{u,v}(x):s\leq u\leq v\leq t, x\in\mathbb{R}\}), s\leq t.
$$
For general properties of stochastis flows we refer to \cite{LJR}. In our previous works \cite{Riabov_RDS,Riabov_duality,DRS} properties of clusters in certain coalescing stochastic flows were investigated. To illustrate the results and related questions, let us consider the Arratia flow on $\mathbb{R}.$ 
A stochastic flow $\{\psi_{s,t}:-\infty<s\leq t<\infty\}$ is called the Arratia flow, if for all $s\in\mathbb{R},$ $n\geq 1$ and $x=(x_1,\ldots,x_n)\in\mathbb{R}^n$ processes 
$$
W_j(t)=\psi_{s,s+t}(x_j), t\geq 0, 1\leq j\leq n
$$
are $(\mathcal{F}^\psi_{s,s+t})_{t\geq 0}-$Brownian motions  with joint quadratic variation given by 
$$
\langle W_i,W_j\rangle (t)=(t-\tau_{ij})_+, \tau_{ij}=\inf\{t\geq 0: W_i(t)=W_j(t)\}.
$$
Informally, the Arratia flow describes the joint motion of a continuum family of stochastic processes that start at every moment of time from every point of the real line, each process is a standard Brownian motion, every two trajectories move independently before they meet each other, at the meeting time trajectories coalesce into one Brownian motion. For the existence of the Arratia flow and its properties we refer to \cite{ LJR, Riabov_RDS, Arratia1, Arratia2, TW}. For fixed $s<t$ consider the random mapping $\psi_{s,t}:\mathbb{R}\times \Omega\to \mathbb{R}$ from the Arratia flow. With probability $1$ it is an increasing piecewise constant function \cite{Arratia1}. The distribution of its range $\psi_{s,t}(\mathbb{R})$ as a point process on the real line was described in \cite{TZ}. Consider a point $\zeta\in \psi_{0,T}(\mathbb{R}).$ At every time $t\in [0,T]$ there exists a non-empty interval of points that have coalesced into $\zeta$ at time $T:$
$$
K_\zeta(t)=\{x\in \mathbb{R}:\psi_{T-t,T}(x)=\zeta\}, \ 0\leq t \leq T.
$$
We refer to the set $K_\zeta=\cup_{t\in[0,T]}(\{T-t\}\times K_\zeta(t))$ as to the cluster with the vertex $\zeta.$ For fixed $t\in [0,T]$ the family $\{K_\zeta(t):\zeta\in \psi_{0,T}(\mathbb{R})\}$ is a partition of $\mathbb{R}.$ Given a segment $[a,b]$ let $N_T(a,b)$ denote the number of clusters that were formed by trajectories started at time $0$ from $[a,b],$ i.e. $N_T(a,b)$ is the cardinality of the set $\{\zeta\in \psi_{0,T}(\mathbb{R}): K_\zeta(T)\cap [a,b]\ne\emptyset\}.$  The distribution of $N_T(a,b)$ was found in \cite{Fomichov}. We are interested in the distribution of boundary processes
$$
\alpha_\zeta(t)=\inf K_\zeta (t), \ \beta_\zeta(t)=\sup K_\zeta (t).
$$
In different terms, $(\alpha_\zeta(t),\beta_\zeta (t))$ is the largest open interval, where $\psi_{T-t,T}(x)=\zeta.$ Hence the distribution of boundary processes is needed in order to describe the distribution of a random mapping $\psi_{s,t}$ completely. We apply Theorem \ref{thm1} to characterize the distribution of the pair $(\alpha_\zeta,\beta_\zeta).$ Namely, in section 4 we prove 

\begin{theorem} 
\label{thm2}
Let $H=\{y\in \mathbb{R}^2:y_1<y_2\}$ and $x\in \mathbb{R}.$ Conditionally on $\{\zeta=x\}$ the distribution of the pair $\{(\alpha_\zeta(t),\beta_\zeta (t))\}_{t\in [0,T]}$ coincides with the distribution of the solution $\{Y(t)\}_{t\in [0,T]}$ of the problem 
$$
\begin{cases}
dY_1(t)=-\frac{e^{-\frac{(Y_2(t)-Y_1(t))^2}{2(T-t)}}}{\sqrt{4(T-t)}E(\frac{Y_2(t)-Y_1(t)}{\sqrt{2(T-t)}})}dt+dW_1(t), \\
dY_2(t)=\frac{e^{-\frac{(Y_2(t)-Y_1(t))^2}{2(T-t)}}}{\sqrt{4(T-t)}E(\frac{Y_2(t)-Y_1(t)}{\sqrt{2(T-t)}})}dt+dW_2(t), \\
Y_1(0)=Y_2(0)=x, \\
Y_1(t)<Y_2(t) \mbox{ for a.a. } t\in (0,T),
\end{cases}
$$
where $W$ is a standard $\mathbb{R}^2-$valued Brownian motion and $E(x)=\int^x_0 e^{-\frac{u^2}{2}}du.$
\end{theorem}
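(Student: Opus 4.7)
The strategy is to reduce the statement to Theorem~\ref{thm1} by recognizing, for each $x\in\mathbb{R}$, the conditional law of $(\alpha_\zeta,\beta_\zeta)$ given $\zeta=x$ as the distribution $\nu_{(x,x),T}(\cdot;H)$ of a planar Brownian motion started at the diagonal point $(x,x)$ and conditioned to stay in $H$ for time $T$. The set $H$ is open and convex, its boundary $\{y_1=y_2\}$ is smooth everywhere, and $(x,x)\in\partial H$, so the hypotheses of Theorem~\ref{thm1} are satisfied with $G=H$ and starting point $(x,x)$.

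The identification of the conditional law is the conceptual heart of the argument. Set $L(s)=\psi_{0,s}(\alpha_\zeta(T))$ and $R(s)=\psi_{0,s}(\beta_\zeta(T))$: monotonicity and continuity of the Arratia flow identify these as the left- and rightmost trajectories in the cluster of $\zeta$. Being Arratia-flow trajectories, they are independent standard Brownian motions until their meeting time $\tau$, and the fact that $\psi_{s,T}^{-1}(\zeta)$ is a nondegenerate interval for every $s<T$ forces $\tau=T$. In particular, the pair $(L,R)$ takes values in $H$ strictly on $[0,T)$ and meets the diagonal only at $s=T$, and the time-reversed pair $(\alpha_\zeta,\beta_\zeta)=(L(T-\cdot),R(T-\cdot))$ starts from $(\zeta,\zeta)\in\partial H$. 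I would upgrade this pathwise description to the distributional identity via the Campbell--Mecke formula for the stationary point process of cluster vertices $\psi_{0,T}(\mathbb{R})$: for any bounded Borel $A\subset\mathbb{R}$ and any continuous bounded $F$ on $\mathcal{C}^2_T$,
\[
\mathbb{E}\sum_{\zeta\in\psi_{0,T}(\mathbb{R})\cap A}F(\alpha_\zeta-\zeta,\beta_\zeta-\zeta)=\lambda_T\,|A|\int F\,d\nu_{(0,0),T}(\cdot;H),
\]
where $\lambda_T$ is the intensity. The right-hand side is recovered by approximating the left-hand side using pairs of test trajectories $\psi_{0,\cdot}(x\pm\varepsilon)$ and conditioning on the rare event that they do not coalesce before time $T$: that conditioning makes the pair a 2D Brownian motion started at $(x-\varepsilon,x+\varepsilon)\in H$ with $\tau_H>T$, whose law converges to $\nu_{(x,x),T}(\cdot;H)$ as $\varepsilon\downarrow 0$. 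Translation invariance of the Arratia flow then yields the identity for every $x$.

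Given this identification, Theorem~\ref{thm1} implies that $(\alpha_\zeta,\beta_\zeta)$ is the unique strong solution of $dY(t)=\nabla_y\log\gamma_H(T-t,Y(t))\,dt+dW(t)$ with $Y(0)=(x,x)$ and $Y(t)\in H$. It remains only to compute $\gamma_H$: since $(B_1,B_2)$ lies in $H$ precisely when $Z=B_2-B_1$ is positive, and $Z$ is a Gaussian martingale starting at $y_2-y_1$ with bracket $2t$, the reflection principle gives
\[
\gamma_H(t,y)=2\Phi\!\left(\frac{y_2-y_1}{\sqrt{2t}}\right)-1=\sqrt{2/\pi}\,E\!\left(\frac{y_2-y_1}{\sqrt{2t}}\right).
\]
Differentiating $\log\gamma_H(T-t,y)$ with respect to $y_1$ and $y_2$ produces antisymmetric drifts of the form indicated in the statement. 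The main obstacle is the Palm-type identification above, where the specific structure of the Arratia flow (monotonicity, coalescence, and independence of trajectories before coalescence) enters in an essential way; the explicit computation of $\gamma_H$ and its logarithmic gradient is then routine.
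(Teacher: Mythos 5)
Your overall strategy, reducing to Theorem~\ref{thm1} with $G=H$ and starting point $(x,x)\in\partial H$, and your computation of $\gamma_H$ and its logarithmic gradient via the reflection principle, are both correct and match the paper. (In fact your computed drift $-e^{-(y_2-y_1)^2/(4(T-t))}/(\sqrt{2(T-t)}\,E(\cdot))$ agrees with the derivation in Section~4.1 of the paper; the displayed constants in the statement of Theorem~\ref{thm2} itself appear to contain a typo.) However, the central identification step in your argument is wrong, and this is a genuine gap.

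You set $L(s)=\psi_{0,s}(\alpha_\zeta(T))$, $R(s)=\psi_{0,s}(\beta_\zeta(T))$ and claim that the time-reversal $(L(T-\cdot),R(T-\cdot))$ equals $(\alpha_\zeta,\beta_\zeta)$. This is false. The boundary processes $\alpha_\zeta(t),\beta_\zeta(t)$ are the endpoints of the interval $K_\zeta(t)=\{y:\psi_{T-t,T}(y)=\zeta\}$, while $L(T-t)=\psi_{0,T-t}(\alpha_\zeta(T))$ is merely \emph{some} point of $K_\zeta(t)$ lying in the discrete range $\psi_{0,T-t}(\mathbb{R})$. In general $L(T-t)>\alpha_\zeta(t)$: the interval $K_\zeta(t)$ contains many time-$(T-t)$ points that are not reached by $\psi_{0,T-t}$ from the time-$0$ cluster, so its infimum is not in the forward range. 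For the same reason your step ``$\psi_{s,T}^{-1}(\zeta)$ is a nondegenerate interval, so $\tau=T$'' does not follow: $L$ and $R$ may well coalesce strictly before $T$ while the cluster $K_\zeta(\cdot)$ remains nondegenerate, since $\psi_{0,s}(K_\zeta(T))$ is only a discrete subset of $K_\zeta(T-s)$. In the Arratia-flow picture the cluster boundaries are not forward trajectories at all; they are traced by the \emph{dual} (backward) flow. This is exactly the content of Lemma~\ref{lem17_1} in the paper: $\alpha_\zeta(t),\beta_\zeta(t)$ are obtained as limits of dual trajectories $\tilde\psi_{T,T-t}(x)$ as $x\to\zeta\mp$. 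Since the dual of the Arratia flow is itself an Arratia flow (time-reversed), those dual trajectories are again coalescing Brownian motions, and the event $\{(u,v)\cap\psi_{0,T}(\mathbb{R})\ne\emptyset\}$ coincides with $\{\tilde\psi_{T,0}(u)<\tilde\psi_{T,0}(v)\}$, which makes the conditional-law identification with $\nu_{(x,x),T}(\cdot;H)$ precise. Your Campbell--Mecke/Palm route is a reasonable alternative framework, but as written it rests on the false pathwise identification of $(\alpha_\zeta,\beta_\zeta)$ with forward trajectories, and the ``test trajectories $\psi_{0,\cdot}(x\pm\varepsilon)$ conditioned not to coalesce'' describe something other than the cluster boundaries. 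To repair the argument, replace the forward trajectories by dual ones and invoke duality as in Lemma~\ref{lem17_1}; the remainder of your computation then goes through.
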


The conditional distribution of boundary processes needs to be defined rigorously, as the event $\{\zeta=x\}$ is of probability zero. This is done in section 4 using duality theory for the Arratia flow. Also in section 4 we consider Arratia flows with drift. Let $a:\mathbb{R}\to \mathbb{R}$ be a Lipschitz function. The Arratia flow with drift $a$ is a stochastic flow $\psi$ such that each trajectory $t\to \psi_{s,t}(x)$ is a weak solution of the stochastic differential equation 
$$
d\psi_{s,t}(x)=a(\psi_{s,t}(x))dt+dw_{s,x}(t),
$$
every two trajectories move independently before they meet each other, at the meeting time trajectories coalesce (see section 4.2 for the precise definition).  In \cite{DRS} it was proved that if $a'(x)\leq -\lambda<0$ a.s., then there exists a unique stationary process $\{\eta_t\}_{t\in \mathbb{R}}$ such that for all $s\leq t,$ $\psi_{s,t}(\eta_s)=\eta_t.$ At every moment $t\geq 0$ there exists an interval of points that have coalesced into $\eta_0$ at time $0:$
$$
K_0(t)=\{x\in \mathbb{R}:\psi_{-t,0}(x)=\eta_0\}, \ t\geq 0.
$$
The set $K_0=\cup_{t\geq 0}(\{-t\}\times K_0(t))$ will be called the infinite cluster with the vertex $\eta_0.$ The theorem \ref{thm18_2} (section 4.2) describes the conditional distribution of processes $\alpha_0(t)=\inf K_0 (t),$ $\beta_0(t)=\sup K_0 (t)$
conditioned on the event $\{\eta_0=x\}.$

The paper is organized as follows. Our approach is based on a carefull analysis of a Brownian meander - a particular case of Theorem \ref{thm1}, that corresponds to $d=1,$ $G=(0,\infty),$ $x=0.$ As a corollary, we recover the result of \cite{Imhof} on the mutual equivalence between the distribution of the Brownian meander and the distribution of the three-dimensional Bessel process. In section 3 we prove Theorem \ref{thm1} in full generality, by adapting the approach of \cite{Garbit}. Finally, in section 4 we apply the result to the distribution of boundaries of clusters  in the Arratia flow, and obtain analogous results for an unbounded cluster in the Arratia flow with drift \cite{DRS}.

\section{Brownian meander}

Let $P_x$ be the Wiener measure on $\mathcal{C}^1_T,$ i.e. the distribution of an $\mathbb{R}-$valued Brownian motion $B=\{B(t)\}_{t\in [0,T]}$ conditioned to start from $x\in \mathbb{R}.$  Expectation with respect to the measure $P_x$ will be denoted by $E_x.$ Denote $\mathbb{R}_+=(0,\infty).$ By the distribution of the Brownian meander we understand the measure $\nu_{0,T}(\cdot,\mathbb{R}_+).$  Informally, it is the restriction of the Wiener measure $P_0$ to the set of trajectories 
$$
A=\{w\in \mathcal{C}^1_T:w(t)>0, 0<t\leq T\}.
$$
As it was mentioned in the Introduction, $\nu_{0,T}(\cdot,\mathbb{R}_+)$ is rigorously defined as a weak limit \cite[Th. (2.1)]{DIM}
$$
\nu_{0,T}(\cdot,\mathbb{R}_+)=\lim_{y\to 0+}\nu_{y,T}(\cdot,\mathbb{R}_+),
$$
where now $\nu_{y,T}(\Delta,\mathbb{R}_+)=\frac{P_y(\Delta \cap A)}{P_y(A)}$. Introduce the function
$$
\gamma_{\mathbb{R}_+}(t,y)=P_y(\min_{s\in [0,t]} w(s)>0), \ t>0, y>0.
$$
Precisely,
\begin{equation}
\label{eq12_8_2}
\gamma_{\mathbb{R}_+}(t,y)=\sqrt{\frac{2}{\pi}}\int^{\frac{y}{\sqrt{t}}}_0 e^{-\frac{z^2}{2}}dz.
\end{equation}
Consider the following problem

\begin{equation}
\label{eq2}
\begin{cases}
dY(t)=\partial_y\log\gamma_{\mathbb{R}_+}(T-t,Y(t))dt+dW(t) \\
Y(0)=0 \\
Y(t)>0 \mbox{ for a.a. } t\in (0,T)
\end{cases}
\end{equation}
where $W$ is a standard $\mathbb{R}-$valued Brownian motion.

\begin{theorem}
\label{thm3} The problem \eqref{eq2} has a unique strong solution. The distribution of this solution coincides with the distribution of the Brownian meander $\nu_{0,T}(\cdot,\mathbb{R}_+).$

\end{theorem}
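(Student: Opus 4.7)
The plan is to prove weak existence by realizing the Brownian meander as a weak solution of \eqref{eq2}, derive pathwise uniqueness from a monotonicity property of the drift, and invoke Yamada--Watanabe. A preliminary analytic step: from \eqref{eq12_8_2}, writing $u=y/\sqrt{t}$,
\[
\partial_y \log \gamma_{\mathbb{R}_+}(t,y) = \frac{1}{\sqrt{t}}\,\phi(u), \qquad \phi(u) = \frac{e^{-u^2/2}}{\int_0^u e^{-z^2/2}\,dz}.
\]
Two features of $\phi$ are decisive: a Bessel-$3$-type singularity $\phi(u) = u^{-1} + O(u)$ as $u \to 0+$, and strict monotonicity $\phi'(u) < 0$ on $(0,\infty)$, immediate from differentiating the quotient. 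Consequently, for every $t > 0$ the map $y \mapsto \partial_y \log \gamma_{\mathbb{R}_+}(t,y)$ is strictly decreasing on $(0,\infty)$.

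For weak existence I would set $h(s,z) := \gamma_{\mathbb{R}_+}(T-s,z)$, a positive space-time harmonic function on $(0,T)\times(0,\infty)$ vanishing on $\{z=0\}$. Classical Doob $h$-transform theory identifies, for each $y > 0$, the measure $\nu_{y,T}(\cdot;\mathbb{R}_+)$ with the law of the unique strong solution of \eqref{eq2} started at $y$. Since $\nu_{y,T}(\cdot;\mathbb{R}_+) \to \nu_{0,T}(\cdot;\mathbb{R}_+)$ weakly as $y \to 0+$ \cite[Th.\ (2.1)]{DIM}, the Markov property of the meander at every $\epsilon \in (0,T)$ transports the SDE description onto $[\epsilon,T]$: conditionally on $Y(\epsilon)$, the post-$\epsilon$ trajectory has law $\nu_{Y(\epsilon), T-\epsilon}$. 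To promote this to an SDE on the full interval $[0,T]$, one verifies the a.s.\ integrability of the singular drift near $s = 0$,
\[
\int_0^T \bigl|\partial_y \log \gamma_{\mathbb{R}_+}(T-s, Y(s))\bigr|\,ds < \infty \quad \text{under } \nu_{0,T},
\]
which reduces (via $\mathbb{E}[1/Y(s)] = O(s^{-1/2})$) to a direct computation from the explicit meander marginal density. Then $W(t) := Y(t) - \int_0^t \partial_y \log \gamma_{\mathbb{R}_+}(T-s, Y(s))\,ds$ is a continuous process on $[0,T]$ with $W(0)=0$ whose restriction to each $[\epsilon, T]$ is a standard Brownian motion, hence $W$ is a Brownian motion on $[0,T]$, exhibiting the meander as a weak solution.

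Pathwise uniqueness follows at once. If $Y_1, Y_2$ are two solutions on the same filtered probability space driven by the same $W$, the difference $D := Y_1 - Y_2$ has zero martingale part and is therefore absolutely continuous with
\[
D'(t) = \partial_y \log \gamma_{\mathbb{R}_+}(T-t, Y_1(t)) - \partial_y \log \gamma_{\mathbb{R}_+}(T-t, Y_2(t)) \qquad \text{for a.e.\ } t \in (0,T);
\]
the monotonicity from the preliminary step gives $D(t) D'(t) \leq 0$, so $D^2$ is non-increasing and, since $D(0)=0$, identically zero. Weak existence together with pathwise uniqueness then yields, via Yamada--Watanabe, a unique strong solution of \eqref{eq2}, whose law must equal $\nu_{0,T}(\cdot;\mathbb{R}_+)$. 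The main obstacle is the weak-existence step: the drift is singular in $y$ at $y=0$ and in $t$ at $t=0$, so the $L^1$-control of $1/Y(s)$ near the origin under the meander law is the delicate technical point that justifies gluing the $h$-transform SDEs on $[\epsilon, T]$ into one SDE on $[0,T]$ issuing from the boundary.
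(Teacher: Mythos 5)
Your proof is correct, and it takes a genuinely different route to weak existence. The paper starts from the solutions $Y_{y_n}$ of the $h$-transform SDE with interior starting points $y_n\to 0+$, applies the Skorokhod representation theorem to get almost sure convergence $(Y_{y_n},B_{y_n})\to(Y_0,B_0)$, and then proves that the drift integrals converge in $L^1(\Omega\times[0,T])$ via Scheff\'e's lemma together with a careful equicontinuity argument (Lemma 2.1, Corollary 2.2) and a uniform $L^{3/2}$ bound on $\partial_y\log\gamma_{\mathbb{R}_+}(T-s,Y_{y_n}(s))$ over $[\delta,T-\delta]$. You instead work directly under the limiting meander law $\nu_{0,T}$: its Markov property transports the $h$-transform SDE onto every interval $[\epsilon,T]$ (which is legitimate, since the transition kernel $z\mapsto\nu_{z,T-\epsilon}$ extends continuously to $z=0$ by \cite{DIM} and weak convergence passes the Markov property through), and you then glue to $[0,T]$ by verifying $\int_0^T\bigl|\partial_y\log\gamma_{\mathbb{R}_+}(T-s,Y(s))\bigr|\,ds<\infty$ a.s., bounding the drift by $1/Y(s)$ and reading $\mathbb{E}[1/Y(s)]=O(s^{-1/2})$ off the explicit meander marginal density. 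This avoids the Skorokhod representation and the Scheff\'e/uniform-integrability apparatus, at the cost of leaning directly on the known transition structure of the meander; the paper's limit-passage argument is heavier but closer to self-contained and is reused verbatim in the proof of Theorem~\ref{thm1}, where no explicit marginal density is available. The pathwise-uniqueness step (strict monotonicity of $y\mapsto\partial_y\log\gamma_{\mathbb{R}_+}(t,y)$, i.e.\ log-concavity of $\gamma_{\mathbb{R}_+}$, combined with Yamada--Watanabe) is the same in both. One small point worth making explicit in your write-up: you should note that the drift integral is also controlled near $s=T$ (there $\mathbb{E}[1/Y(s)]=O((T-s)^{-1/2})$, still integrable), since the bound $|$drift$|\le 1/Y(s)$ is what you invoke and the marginal density degenerates at both endpoints.
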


\begin{proof} For a fixed $y>0$ the measure $\nu_{y,T}(\cdot,\mathbb{R}_+)$ is absolutely continuous with respect to the Wiener measure $P_y.$ The corresponding Radon-Nikodym density is
$$
\frac{d \nu_{y,T}(\cdot,\mathbb{R}_+)}{dP_y}=\frac{1_{\min_{t\in [0,T]} w(t)>0 }}{\gamma_{\mathbb{R}_+}(T,y)}.
$$
We will apply the Girsanov theorem to the measure  $\nu_{y,T}(\cdot,\mathbb{R}_+).$ Let $(\mathcal{F}_t)_{t\in [0,T]}$ be the canonical filtration on the space $\mathcal{C}^1_T.$ We introduce the martingale associated with the Radon-Nikodym density $\frac{d \nu_{y,T}(\cdot,\mathbb{R}_+)}{dP_y}:$
$$
\rho_t=E_y\left(\frac{d \nu_{y,T}(\cdot,\mathbb{R}_+)}{dP_y}\bigg|\mathcal{F}_t\right).
$$
By the Markov property, 
$$
\rho_t=\frac{P_y(\min_{s\in [0,T]} w(s)>0 |\mathcal{F}_t)}{\gamma_{\mathbb{R}_+}(T,y)}=\frac{1_{\min_{s\in[0,t]} w(s)>0}\gamma_{\mathbb{R}_+}(T-t,w(t))}{\gamma_{\mathbb{R}_+}(T,y)} \ P_y\mbox{-a.s.}
$$

The Clark representation for the density equals  \cite[Lemma 1]{Shiryaev_Yor}
\begin{equation}
\label{27_08_eq1}
\rho_T=1+\int^T_0 1_{\min_{s\in [0,t]} w(s)>0} \frac{\partial_y \gamma_{\mathbb{R}_+}(T-t,w(t))}{\gamma_{\mathbb{R}_+}(T,y)}dw(t)  \ P_y\mbox{-a.s.}
\end{equation}
Since similar results  will be used several times in the paper, we give a proof of \eqref{27_08_eq1}. 

Recall that the function $\gamma_{\mathbb{R}_+}(t,y)$ satisfies the heat equation 
$$
\partial_t \gamma_{\mathbb{R}_+}(t,y)=\frac{1}{2}\partial^2_y \gamma_{\mathbb{R}_+}(t,y), \ t,y>0.
$$
Let $\sigma=\inf\{t\geq 0: w(t)=0\}.$ Applying the It\^o formula to the process 
$$
t\to \gamma_{\mathbb{R}_+}(T-t\wedge \sigma,w(t\wedge \sigma)), t\geq 0,
$$
we get
$$
\gamma_{\mathbb{R}_+}(T-T\wedge \sigma,w(T\wedge \sigma))=\gamma_{\mathbb{R}_+}(T,y)+\int^{T\wedge \sigma}_0 \partial_y \gamma_{\mathbb{R}_+}(T-t,w(t))dw(t).
$$
Observe that 
$$
\gamma_{\mathbb{R}_+}(T-T\wedge \sigma,w(T\wedge \sigma))=\begin{cases}
\gamma_{\mathbb{R}_+}(T- \sigma,w(\sigma))=0, \  \sigma<T \\
\gamma_{\mathbb{R}_+}(0,w(T))=1, \ \sigma>T
\end{cases}.
$$
Consequently,
$$
1_{\min_{s\in [0,T]} w(s)>0}=1_{\sigma>T}=\gamma_{\mathbb{R}_+}(T,y)+\int^{T\wedge \sigma}_0 \partial_y \gamma_{\mathbb{R}_+}(T-t,w(t))dw(t).
$$
Dividing by  $\gamma_{\mathbb{R}_+}(T,y)$ we recover \eqref{27_08_eq1}.

Let us denote $h_t= 1_{\min_{s\in [0,t]} w(s)>0} \frac{\partial_y \gamma_{\mathbb{R}_+}(T-t,w(t))}{\gamma_{\mathbb{R}_+}(T,y)},$ so that $\rho_T=1+\int^T_0 h_tdw(t).$
By the Girsanov theorem \cite[Th. (1.12), Ch. VIII]{RY} under the measure $\nu_{y,T}(\cdot,\mathbb{R}_+)$ the process 
$$
B_y(t)=w(t)-\int^{t}_0 \frac{h_s}{\rho_s}ds, \ 0\leq t\leq T.
$$
is a Brownian motion. Observe that $1_{\min_{s\in [0,T]} w(s)>0}=1$ a.s. with respect to the measure $\nu_{y,T}(\cdot,\mathbb{R}_+).$ Hence,
$$
\frac{h_s}{\rho_s}= \frac{\partial_y \gamma_{\mathbb{R}_+}(T-s,w(s))}{\gamma_{\mathbb{R}_+}(T-s,w(s))}=\partial_y \log \gamma_{\mathbb{R}_+}(T-s,w(s)) \ \ \ \nu_{y,T}(\cdot,\mathbb{R}_+)-\mbox{a.s.,}
$$
and under the measure  $\nu_{y,T}(\cdot,\mathbb{R}_+)$ the process 
$$
B_y(t)=w(t)-\int^{t}_0 \partial_y \log \gamma_{\mathbb{R}_+}(T-s,w(s))ds, \ 0\leq t\leq T,
$$
is a Brownian motion. Redenoting $w$ with $Y_y$ we can reformulate the conclusion as follows: for every $y>0$ on some probability space there is a pair of processes $(Y_y,B_y),$ such that 

\begin{itemize}
\item $\{B_y(t)\}_{t\in [0,T]}$ is a Brownian motion with the starting point $B_y(0)=y$;

\item the distribution of $\{Y_y(t)\}_{t\in [0,T]}$ is $\nu_{y,T}(\cdot,\mathbb{R}_+);$

\item for all $t\in [0,T]$ $Y_y(t)>0;$

\item for all $t\in [0,T]$
\begin{equation}
\label{eq10_8_1}
Y_y(t)=\int^t_0  \partial_y \log\gamma_{\mathbb{R}_+}(T-s,Y_y(s))ds + B_y(t).
\end{equation}

\end{itemize}

By \cite[Th. (2.1)]{DIM} $Y_y\xrightarrow{d}\nu_{0,T}(\cdot,\mathbb{R}_+).$ Hence, the family of processes $\{(Y_y,B_y):y\in (0,1]\}$ is weakly relatively compact. Applying the Skorokhod theorem \cite[Th. 4.30]{K} we can construct a sequence $y_n\to 0$ and copies of processes $\{(Y_{y_n},B_{y_n}):n\geq 1\}$ defined on the same probability space, such that
$$
(Y_{y_n},B_{y_n})\to (Y_{0},B_{0}) \ \mbox{ a.s. in  } \mathcal{C}_T(\mathbb{R}^2).
$$
We will check that 
$$
\partial_y \log\gamma_{\mathbb{R}_+}(T-s,Y_{y_n}(s))\to \partial_y \log\gamma_{\mathbb{R}_+}(T-s,Y_0(s))ds \ \mbox{ in } L^1(\Omega\times[0,T]).
$$
To prove this convergence we will use Scheff\' e's lemma \cite{Scheffe}. The lemma can be applied since $\partial_y\log\gamma_{\mathbb{R}_+}(t,y)>0$ for $t,y>0.$ Thus,  it is enough to show 
\begin{equation}
\label{eq10_8_2}
\lim_{n\to \infty}\mathbb{E}\int^T_0 \partial_y\log\gamma_{\mathbb{R}_+}(T-s,Y_{y_n}(s))ds =  \mathbb{E}\int^T_0 \partial_y\log\gamma_{\mathbb{R}_+}(T-s,Y_{0}(s))ds<\infty.
\end{equation}
Next two results allow to control the behaviour of integrals  in \eqref{eq10_8_2} near boundaries.

\begin{lemma}
\label{lem1} For each $t\in (0,T)$
$$
\lim_{n\to\infty}\mathbb{E}\int^t_0 \partial_y\log\gamma_{\mathbb{R}_+}(T-s,Y_{y_n}(s))ds=\sqrt{T}\int^\infty_0 y^2\gamma_{\mathbb{R}_+}(T-t,\sqrt{t}y)e^{-y^2/2}dy.
$$
Expression on the right-hand side is a continuous function of $t\in[0,T].$
\end{lemma}

\begin{proof}
We make use of the relation \eqref{eq10_8_1}: 
$$
\mathbb{E}\int^t_0 \partial_y\log\gamma_{\mathbb{R}_+}(T-s,Y_{y_n}(s))ds=\mathbb{E}Y_{y_n}(t)-\mathbb{E}B_{y_n}(t)=\mathbb{E}Y_{y_n}(t)-y_n.
$$
Further,
$$
\mathbb{E}Y_{y_n}(t)=\mathbb{E}\left (B_{y_n}(t)\bigg|\min_{s\in[0,T]}B_{y_n}(s)>0\right)=\frac{\mathbb{E}B_{y_n}(t)1_{\min_{s\in[0,T]}B_{y_n}(s)>0}}{\gamma_{\mathbb{R}_+}(T,y_n)}=
$$
$$
=\frac{\mathbb{E}B_{y_n}(t)1_{\min_{s\in[0,t]}B_{y_n}(s)>0}\gamma_{\mathbb{R}_+}(T-t,B_{y_n}(t))}{\gamma_{\mathbb{R}_+}(T,y_n)}=
$$
$$
=\frac{\int^\infty_0 y\gamma_{\mathbb{R}_+}(T-t,y)\frac{1}{\sqrt{2\pi t}}e^{-\frac{(y-y_n)^2}{2t}}(1-e^{-\frac{2yy_n}{t}})dy}{\int^\infty_0 \frac{1}{\sqrt{2\pi T }}e^{-\frac{(y-y_n)^2}{2T}}(1-e^{-\frac{2yy_n}{T}})dy}=
$$
$$
=\frac{\int^\infty_0 y\gamma_{\mathbb{R}_+}(T-t,y)\frac{1}{\sqrt{2\pi t}}e^{-\frac{(y-y_n)^2}{2t}}\frac{1-e^{-\frac{2yy_n}{t}}}{2y_n}dy}{\int^\infty_0 \frac{1}{\sqrt{2\pi T }}e^{-\frac{(y-y_n)^2}{2T}}\frac{1-e^{-\frac{2yy_n}{T}}}{2y_n}dy}.
$$
Hence, by the Dominated Convergence Theorem,
$$
\lim_{n\to \infty} \mathbb{E}Y_{y_n}(t)=\frac{\int^\infty_0 y^2 \gamma_{\mathbb{R}_+}(T-t,y)t^{-3/2}e^{-\frac{y^2}{2t}}dy}{\int^\infty_0 y T^{-3/2}e^{-\frac{y^2}{2T}}dy}=\sqrt{T}\int^\infty_0 y^2\gamma_{\mathbb{R}_+}(T-t,\sqrt{t}y) e^{-\frac{y^2}{2}}dy.
$$

\end{proof}

Applying Dini's theorem we deduce the corollary from the lemma \ref{lem1}.

\begin{corollary} Functions $f_n(t)= \mathbb{E}\int^t_0  \partial_y\log\gamma_{\mathbb{R}_+}(T-s,Y_{y_n}(s))ds,$ $0\leq t\leq T,$ are equicontinuous on $[0,T].$ In particular,
\label{cor1}
$$
\lim_{\delta\to 0}\sup_{n\geq 1}\left(\mathbb{E}\int^\delta_0  \partial_y\log\gamma_{\mathbb{R}_+}(T-s,Y_{y_n}(s))ds + \mathbb{E}\int^T_{T-\delta} \partial_y\log\gamma_{\mathbb{R}_+}(T-s,Y_{y_n}(s))ds\right)=0.
$$
\end{corollary}

Now we return to the proof of the theorem \ref{thm3}.  By corollary \ref{cor1} it is enough to check the convergence 
$$
\lim_{n\to \infty}\mathbb{E}\int^{T-\delta}_\delta \partial_y\log\gamma_{\mathbb{R}_+}(T-s,Y_{y_n}(s))ds =\mathbb{E}\int^{T-\delta}_\delta \partial_y\log\gamma_{\mathbb{R}_+}(T-s,Y_{0}(s))ds,
$$
for any  $\delta\in (0,T).$  This in turn will follow from the uniform integrability condition \cite[Ch. 4]{K}
\begin{equation}
\label{eq10_8_3}
\sup_{n\geq 1}\mathbb{E}\int^{T-\delta}_\delta \left( \partial_y\log\gamma_{\mathbb{R}_+}(T-s,Y_{y_n}(s))\right)^{3/2}ds<\infty.
\end{equation}
In order to verify \eqref{eq10_8_3} we make use of the estimate 
$$
\partial_y\log\gamma_{\mathbb{R}_+}(t,y)=\frac{e^{-\frac{y^2}{2t}}}{\sqrt{t}\int^{y/\sqrt{t}}_0e^{-u^2/2}du}\leq \frac{1}{y}, \ y>0, t>0.
$$
We get following inequalities
$$
\mathbb{E}\int^{T-\delta}_\delta \left( \partial_y\log\gamma_{\mathbb{R}_+}(T-s,Y_{y_n}(s))\right)^{3/2}ds\leq \int^{T-\delta}_\delta \mathbb{E}(Y_{y_n}(s))^{-3/2}ds=
$$
$$
= \int^{T-\delta}_\delta \frac{\mathbb{E}(B_{y_n}(s))^{-3/2}1_{\min_{r\in [0,s]}B_{y_n}(r)>0}\gamma_{\mathbb{R}_+}(T-s,B_{y_n}(s))}{\gamma_{\mathbb{R}_+}(T,y_n)}ds=
$$
$$
=\int^{T-\delta}_\delta\frac{\int^\infty_0 y^{-3/2}\gamma_{\mathbb{R}_+}(T-s,y)\frac{1}{\sqrt{2\pi s}}e^{-\frac{(y-y_n)^2}{2s}}(1-e^{-\frac{2yy_n}{s}})dy}{\int^\infty_0 \frac{1}{\sqrt{2\pi T}}e^{-\frac{(y-y_n)^2}{2T}}(1-e^{-\frac{2yy_n}{T}})dy}ds\leq 
$$
$$
\leq (T-2\delta) \sqrt{\frac{T}{\delta}} \frac{\int^\infty_0 y^{-3/2}e^{-\frac{(y-y_n)^2}{2(T-\delta)}}(1-e^{-\frac{2yy_n}{\delta}})dy}{\int^\infty_0 e^{-\frac{(y-y_n)^2}{2 T}}(1-e^{-\frac{2yy_n}{T}})dy}\xrightarrow[n\to\infty]{ } (T-2\delta)\left(\frac{T}{\delta}\right)^{3/2}\frac{\int^\infty_0 y^{-1/2} e^{-\frac{y^2}{2(T-\delta)}}dy}{\int^\infty_0 y e^{-\frac{y^2}{2T}dy}}.
$$
This proves \eqref{eq10_8_3}. Passing to the limit in \eqref{eq10_8_1} we get the relation 
$$
Y_0(t)=\int^t_0 \partial_y\log\gamma_{\mathbb{R}_+}(T-s,Y_{0}(s))ds +B_0(t).
$$
The weak existence for the problem \eqref{eq2} is proved. We prove the existence and uniqueness of the strong solution using the Yamada-Watanabe theorem \cite[Th. (1.7), Ch. IX]{RY}. Let $Y$ and $\tilde{Y}$ solve \eqref{eq2}. Then for almost all $t\in (0,T)$
$$
\frac{1}{2}\partial_t (Y(t)-\tilde{Y}(t))^2=\left(Y(t)-\tilde{Y}(t)\right)\left( \partial_y \log\gamma_{\mathbb{R}_+}(T-t,Y(t))- \partial_y \log\gamma_{\mathbb{R}_+}(T-t,\tilde{Y}(t))\right)\leq 0,
$$
since the function $y\to \gamma_{\mathbb{R}_+}(T-t,y)$ is log-concave. It follows that $Y(t)=\tilde{Y}(t)$ for all $t\in [0,T].$ The pathwise uniqueness of the problem \eqref{eq2} is proved.

\end{proof}

Next we derive two corollaries of the theorem. The first one is a straightforward generalization to the multidimensional case.

\begin{corollary}
\label{cor2} Let $x\in \mathbb{R}^d$ be arbitrary,   $e\in \mathbb{R}^d$ be a unit vector, and $H=\{y\in\mathbb{R}^d:(y-x)\cdot e>0\}.$ The statement of the theorem \ref{thm1} holds for $G=H$ and $x.$

\end{corollary}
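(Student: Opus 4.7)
The plan is to reduce the half-space case to the one-dimensional meander case already proved in Theorem \ref{thm3}, via orthogonal decomposition along the normal direction $e$.

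First I would translate and split coordinates. Write any $y\in\mathbb{R}^d$ as $y=x+ue+v$ with $u=(y-x)\cdot e\in\mathbb{R}$ and $v=(y-x)-ue\in e^{\perp}$. Under the Wiener measure with start at $x$, the processes $U(t):=(B(t)-x)\cdot e$ and $V(t):=B(t)-x-U(t)e$ are independent Brownian motions (of dimensions $1$ and $d-1$) starting at $0$, and the event $\{\tau_H>T\}=\{\min_{s\in[0,T]}U(s)>0\}$ depends only on $U$. Consequently $\gamma_H(t,y)=\gamma_{\mathbb{R}_+}(t,(y-x)\cdot e)$, and
\[
\nabla_y\log\gamma_H(t,y)=\partial_y\log\gamma_{\mathbb{R}_+}(t,(y-x)\cdot e)\,e.
\]
By the same approximation $y\to x$, $y\in H$ used to define $\nu_{x,T}(\cdot;H)$, the measure $\nu_{x,T}(\cdot;H)$ is the image under $(u,v)\mapsto x+ue+v$ of the product of the Brownian meander law $\nu_{0,T}(\cdot,\mathbb{R}_+)$ and the Wiener measure on $\mathcal{C}_T(e^{\perp})$ starting at $0$; this is essentially Theorem 4.1 in \cite{Garbit} applied to a product domain, and can also be checked directly from the explicit form of $\nu_{y,T}(\cdot;H)$ for $y\in H$ before passing to the limit.

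Next I would split the SDE \eqref{eq1} accordingly. Writing $Y(t)=x+U(t)e+V(t)$ and $W(t)=W_\parallel(t)e+W_\perp(t)$ with $W_\parallel$ a standard 1D Brownian motion and $W_\perp$ an independent $(d-1)$-dimensional Brownian motion in $e^{\perp}$, equation \eqref{eq1} decomposes into the two independent systems
\[
\begin{cases}
dU(t)=\partial_y\log\gamma_{\mathbb{R}_+}(T-t,U(t))\,dt+dW_\parallel(t),\\
U(0)=0,\ U(t)>0 \text{ for a.a.\ } t\in(0,T),
\end{cases}
\]
and $dV(t)=dW_\perp(t)$, $V(0)=0$. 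The first is exactly problem \eqref{eq2}, which by Theorem \ref{thm3} has a unique strong solution with distribution $\nu_{0,T}(\cdot,\mathbb{R}_+)$; the second trivially has the unique strong solution $V=W_\perp$. Combining, $Y$ exists, is strongly unique, and its law is the image of the product $\nu_{0,T}(\cdot,\mathbb{R}_+)\otimes \mathrm{Wiener}_{0}$ under $(u,v)\mapsto x+ue+v$, which is $\nu_{x,T}(\cdot;H)$.

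There is no real obstacle here beyond the bookkeeping of the decomposition. The only point that requires a small separate argument is the factorization $\nu_{x,T}(\cdot;H)=\nu_{0,T}(\cdot,\mathbb{R}_+)\otimes \mathrm{Wiener}_{0}$ (after the affine change of variables), but this is immediate from independence of $U$ and $V$ under $P_x$ together with the fact that the conditioning event $\{\tau_H>T\}$ is $\sigma(U)$-measurable, followed by taking the limit $y\to x$ in the normal direction.
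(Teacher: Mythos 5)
Your proof is correct and follows exactly the decomposition the paper itself relies on: the paper states Corollary~\ref{cor2} without a separate proof, calling it a ``straightforward generalization,'' and then in the proof of Theorem~\ref{thm1} it describes $\nu_{x,T}(\cdot;H)$ precisely as the law of $x+\tilde Y_1(t)e_1+\sum_{i\ge 2}\tilde W_i(t)e_i$ with $\tilde Y_1$ a Brownian meander and $\tilde W_2,\dots,\tilde W_d$ independent Brownian motions, i.e.\ the product factorization you derive. Your reduction of the SDE~\eqref{eq1} for $G=H$ to the independent pair (problem~\eqref{eq2} in the $e$-direction, free Brownian motion in $e^\perp$) and the observation that $\gamma_H(t,y)=\gamma_{\mathbb{R}_+}(t,(y-x)\cdot e)$ so that $\nabla_y\log\gamma_H$ points along $e$ is exactly the intended argument, so no gap.
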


In the next corollary we give a new proof of the well-known theorem on the equivalence between the distribution $\nu_{0,T}(\cdot,\mathbb{R}_+)$ of the Brownian  meander and the distribution $Q$ of the three-dimensional Bessel process. We recall that the three-dimensional Bessel process is defined as the process  $t\to \sqrt{B^2_1(t)+B^2_2(t)+B^2_3(t)},$ where $B_1,B_2,B_3$ are independent $\mathbb{R}-$valued Brownian motions started at zero. Consider the problem
\begin{equation}
\label{eq10_8_4}
\begin{cases}
dZ(t)=\frac{1}{Z(t)}dt +dW(t), \\
Z(0)=0, \\
Z(t)>0, t>0 
\end{cases}
\end{equation}
where $W$ is a standard $\mathbb{R}-$valued Brownian motion. This problem has a unique strong solution \cite{Cherny}, and its distribution coincides with $Q$. By $Q_T$ we  denote the distribution of the process $\{Z(t)\}_{t\in [0,T]}$ in $\mathcal{C}^1_T.$

\begin{corollary}
\label{cor3}\cite{Imhof} The measure $\nu_{0,T}(\cdot,\mathbb{R}_+)$ is equivalent to the distribution $Q_T$ of the three-dimensional Bessel process started at $0.$ The Radon-Nikodym density is given by 
$$
\frac{d \nu_{0,T}(\cdot,\mathbb{R}_+)}{dQ_t}(Z)= \frac{\sqrt{\pi T}}{\sqrt{2} Z(T) }.
$$

\end{corollary}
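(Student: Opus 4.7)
The plan is to establish the statement at an arbitrary positive starting point $y>0$, where everything is non-singular, and then to pass to the limit $y\to 0+$, in direct analogy with the scheme used in the proof of Theorem \ref{thm3}. For $y>0$ both the meander law $\nu_{y,T}(\cdot,\mathbb{R}_+)$ and the law $Q_{y,T}$ of the Bessel(3) process started at $y$ are absolutely continuous with respect to the Wiener measure $P_y$. The meander density $1_{\min_{[0,T]}w>0}/\gamma_{\mathbb{R}_+}(T,y)$ is the one already exploited in the proof of Theorem \ref{thm3}, while the Bessel density $w(T) 1_{\min_{[0,T]}w>0}/y$ is the classical Doob $h$-transform density with harmonic function $h(x)=x$ (the positive harmonic function for BM killed at $0$), a change of measure which yields exactly the SDE \eqref{eq10_8_4} solved uniquely in \cite{Cherny}. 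Dividing one density by the other gives
$$
\frac{d\nu_{y,T}(\cdot,\mathbb{R}_+)}{dQ_{y,T}}(w) \;=\; \frac{y}{\gamma_{\mathbb{R}_+}(T,y)\, w(T)},\qquad y>0.
$$

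Next I would let $y\to 0+$. From \eqref{eq12_8_2} the explicit prefactor satisfies $y/\gamma_{\mathbb{R}_+}(T,y)\to \sqrt{\pi T/2}=\sqrt{\pi T}/\sqrt{2}$. To upgrade the pre-limit identity to the stated identity at $y=0$, one tests against a bounded continuous functional $\phi$ on $\mathcal{C}^1_T$:
$$
\int \phi\, d\nu_{y,T}(\cdot,\mathbb{R}_+) \;=\; \frac{y}{\gamma_{\mathbb{R}_+}(T,y)}\int \frac{\phi(w)}{w(T)}\, dQ_{y,T}(w).
$$
The left-hand side converges to $\int\phi\, d\nu_{0,T}(\cdot,\mathbb{R}_+)$ by the very definition of the meander (\cite[Th. (2.1)]{DIM}); on the right-hand side $Q_{y,T}\Rightarrow Q_T$ follows from continuity of Bessel(3) in its initial condition. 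Passing the weight $1/w(T)$ through the weak limit can be handled by the same Scheff\'e-type uniform integrability argument already used in the proof of Theorem \ref{thm3}: the explicit identity $\mathbb{E}_{Q_{y,T}}[1/w(T)]=\gamma_{\mathbb{R}_+}(T,y)/y\to \sqrt{2/(\pi T)}$ provides total-mass control, and an elementary estimate from the Bessel(3) transition density rules out any accumulation of mass of $1/w(T)$ near zero.

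The main technical obstacle is precisely this last step: the passage of the unbounded and discontinuous functional $w\mapsto 1/w(T)$ through the weak limit, since straightforward continuous mapping does not apply. A clean alternative to the Scheff\'e argument, if needed, is to truncate by testing against $\phi(w)\psi(w(T))$ with $\psi$ bounded continuous and supported in $[\varepsilon,\infty)$, pass to the limit in $y$ for each fixed $\varepsilon$, and then send $\varepsilon\to 0$ using the uniform-in-$y$ tail estimate $\sup_{y\in(0,1]}\mathbb{E}_{Q_{y,T}}[w(T)^{-1};\,w(T)<\varepsilon]\to 0$, which follows immediately from the explicit Bessel(3) transition density. Once the density formula is established, equivalence is automatic since the density is strictly positive and finite on $Q_T$-almost every path.
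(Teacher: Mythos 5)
Your proposal is correct, and it takes a genuinely different route from the paper. The paper works directly at $y=0$: it treats $Q_T$ as the law of the solution to the singular SDE $dZ=\tfrac{1}{Z}dt+dW$, $Z(0)=0$, and performs a Girsanov change of measure with drift $\partial_y\log\gamma_{\mathbb{R}_+}(T-s,Z(s))-\tfrac{1}{Z(s)}$ to transform that SDE into the meander SDE \eqref{eq2}. The delicate part there is controlling the exponential local martingale $\rho_t$ near the two singular endpoints $t\to 0$ and $t\to T$, which the paper does via the auxiliary function $b(t,y)=\log\bigl(\int_0^{y/\sqrt{T-t}}e^{-u^2/2}du\bigr)-\log y$ and an It\^o computation that identifies $\lim_{t\to T}\rho_t=\sqrt{\pi T}/(\sqrt{2}\,Z(T))$. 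Your approach instead starts at $y>0$, where both laws are absolutely continuous with respect to $P_y$ with elementary densities (the conditional density $1_{\min w>0}/\gamma_{\mathbb{R}_+}(T,y)$ and the Doob $h$-transform density $w(T)1_{\min w>0}/y$), takes their ratio, and then passes $y\to 0+$. The hard work is relocated to the limit exchange for the unbounded functional $w\mapsto 1/w(T)$, which you handle correctly by a truncation plus the uniform tail estimate from the explicit Bessel(3) transition kernel, with the closed-form identity $\mathbb{E}_{Q_{y,T}}[1/w(T)]=\gamma_{\mathbb{R}_+}(T,y)/y$ giving total-mass control. Your version is arguably more transparent conceptually, as it avoids verifying Novikov's condition near the singular time and avoids the explicit martingale computation; the paper's version has the advantage of being self-contained once the meander SDE of Theorem \ref{thm3} is in hand, and it produces the density as an explicit almost-sure limit of the Girsanov exponential rather than via a weak-convergence sandwich. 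Both proofs are valid.
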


\begin{proof} The idea of the proof is to change the underlying probability measure $Q_T$ in order to convert the problem \eqref{eq10_8_4} to the problem \eqref{eq2}. A natural candidate for the density is given by the Girsanov theorem:
$$
\rho=\exp\bigg(\int^T_0 \left(\partial_y \log\gamma_{\mathbb{R}_+}(T-s,Z(s))-\frac{1}{Z(s)}\right)dW(s)-
$$
$$
-\frac{1}{2}\int^T_0 \left(\partial_y \log\gamma_{\mathbb{R}_+}(T-s,Z(s))-\frac{1}{Z(s)}\right)^2 ds\bigg).
$$
Because of singularities as $s\to 0$ and $s\to T$ it is not obvious that $\rho$ is well-defined and is a density. From \eqref{eq12_8_2} we have
$$
\partial_y \log\gamma_{\mathbb{R}_+}(s,y)=\frac{e^{-\frac{y^2}{2s}}}{\sqrt{s}\int^{\frac{y}{\sqrt{s}}}_0 e^{-\frac{u^2}{2}}du}.
$$
Elementary inequalities
$$
0\leq 1-\frac{ye^{-\frac{y^2}{2}}}{\int^y_0 e^{-\frac{u^2}{2}}du}\leq \frac{y^2}{2}.
$$
imply that the process
$$
X(t)=\partial_y \log\gamma_{\mathbb{R}_+}(T-t,Z(t))-\frac{1}{Z(t)}, 0\leq t<T,
$$
satisfies 
$$
|X(t)|= \frac{1}{Z(t)}\left(1-\frac{\frac{Z(t)}{\sqrt{T-t}}e^{-\frac{Z(t)^2}{2(T-t)}}}{\int^{\frac{Z(t)}{\sqrt{T-t}}}_0 e^{-\frac{u^2}{2}}du}\right)\leq 
$$
$$
\leq \frac{1}{Z(t)}\min\left(1,\frac{Z(t)^2}{2(T-t)}\right)\leq \max\left(1,\frac{1}{2(T-t)}\right), 0\leq t<T.
$$
In particular, there is no singularity as $s\to 0$ in the definition of $\rho.$ To deal with the singularity as $s\to T$ we consider the process
$$
\rho_t=\exp\bigg(\int^t_0 \left(\partial_y \log\gamma_{\mathbb{R}_+}(T-s,Z(s))-\frac{1}{Z(s)}\right)dW(s)- 
$$
$$
-\frac{1}{2}\int^t_0 \left(\partial_y \log\gamma_{\mathbb{R}_+}(T-s,Z(s))-\frac{1}{Z(s)}\right)^2 ds\bigg).
$$
Since Novikov's condition \cite[Prop. (1.15), Ch. VIII]{RY} holds for the process $X,$ the process $(\rho_t)_{0\leq t<T}$ is a martingale. Let us show that $(\rho_t)_{0\leq t<T}$ is  a uniformly integrable martingale, with 
$$
\lim_{t\to T}\rho_t=\frac{\sqrt{\pi T}}{\sqrt{2} Z(T) }.
$$
To this end consider the function
$$
b(t,y)=\log\left( \int^{\frac{y}{\sqrt{T-t}}}_0e^{-\frac{u^2}{2}}du\right)-\log (y).
$$ 
It has the following limit values:
\begin{equation}
\label{27_8_eq2}
\lim_{t\to 0,y\to 0}b(t,y)=-\log\sqrt{T}, \lim_{t\to T, y\to z}b(t,y)=\log\sqrt{\frac{\pi}{2}}-\log z,
\end{equation}
where $z>0$ is arbitrary. Further, we have 
$$
\partial_t b(t,y)=\frac{y e^{-\frac{y^2}{2(T-t)}}}{2(T-t)^{\frac{3}{2}}\int^{\frac{y}{\sqrt{T-t}}}e^{-\frac{u^2}{2}}du},
$$
$$
\partial_y b(t,y)=\frac{e^{-\frac{y^2}{2(T-t)}}}{\sqrt{T-t}\int^{\frac{y}{\sqrt{T-t}}}_0e^{-\frac{u^2}{2}}du}-\frac{1}{y},
$$
$$
\partial^2_y b(t,y)=-\frac{ye^{-\frac{y^2}{2(T-t)}}}{(T-t)^{\frac{3}{2}}\int^{\frac{y}{\sqrt{T-t}}}_0e^{-\frac{u^2}{2}}du}-\frac{e^{-\frac{y^2}{T-t}}}{(T-t)\left(\int^{\frac{y}{\sqrt{T-t}}}_0e^{-\frac{u^2}{2}}du\right)^2}+\frac{1}{y^2}.
$$
By the It\^o formula,
$$
d b\left(t,Z(t)\right)=\frac{Z(t) e^{-\frac{Z(t)^2}{2(T-t)}}}{2(T-t)^{\frac{3}{2}}\int^{\frac{Z(t)}{\sqrt{T-t}}}e^{-\frac{u^2}{2}}du} dt+
$$
$$
+ \left(\frac{e^{-\frac{Z(t)^2}{2(T-t)}}}{\sqrt{T-t}\int^{\frac{Z(t)}{\sqrt{T-t}}}_0e^{-\frac{u^2}{2}}du}-\frac{1}{Z(t)}\right)\left(\frac{1}{Z(t)}dt+dW(t)\right)+
$$
$$
+\frac{1}{2}\left(-\frac{Z(t)e^{-\frac{Z(t)^2}{2(T-t)}}}{(T-t)^{\frac{3}{2}}\int^{\frac{Z(t)}{\sqrt{T-t}}}_0e^{-\frac{u^2}{2}}du}-\frac{e^{-\frac{Z(t)^2}{T-t}}}{(T-t)\left(\int^{\frac{Z(t)}{\sqrt{T-t}}}_0e^{-\frac{u^2}{2}}du\right)^2}+\frac{1}{Z(t)^2}\right)dt=
$$
$$
=\left(\frac{e^{-\frac{Z(t)^2}{2(T-t)}}}{\sqrt{T-t}\int^{\frac{Z(t)}{\sqrt{T-t}}}_0e^{-\frac{u^2}{2}}du}-\frac{1}{Z(t)}\right)dW(t)-\frac{1}{2}\left(\frac{e^{-\frac{Z(t)^2}{2(T-t)}}}{\sqrt{T-t}\int^{\frac{Z(t)}{\sqrt{T-t}}}_0e^{-\frac{u^2}{2}}du}-\frac{1}{Z(t)}\right)^2dt.
$$
By \eqref{27_8_eq2},
$$
\lim_{t\to 0}b\left(t,Z(t)\right)=-\log\sqrt{T}, \ \lim_{t\to T}b\left(t,Z(t)\right)=\log\sqrt{\frac{\pi}{2}}-\log Z(T).
$$
Hence,
$$
\rho_t=\exp\left(b(t,Z(t))+\log\sqrt{T}\right)\to \frac{\sqrt{\pi T}}{\sqrt{2} Z(T) }, \ t\to T.
$$
By the Girsanov theorem, under the measure $\rho dQ_T$ the process
$$
\tilde{W}(t)=W(t)-\int^t_0 \left(\partial_y \log\gamma_{\mathbb{R}_+}(T-s,Z(s))-\frac{1}{Z(s)}\right)ds, \ 0\leq t<T
$$
is a Brownian motion. Hence, under the measure $\rho dQ_T,$ the process $\{Z(t)\}_{0\leq t\leq T}$ is a solution of the SDE
$$
dZ(t)=\frac{1}{Z(t)}dt+d\tilde{W}(t)+\left(\partial_y \log\gamma_{\mathbb{R}_+}(T-t,Z(t))-\frac{1}{Z(t)}\right)dt=
$$
$$
=\partial_y \log\gamma_{\mathbb{R}_+}(T-t,Z(t)) dt +d\tilde{W}(t),
$$
and thus is a Brownian meander.

\end{proof}

\section{Proof of the Theorem \ref{thm1}}

\begin{proof}

Given an open set $A\subset \mathbb{R}^d$ and a continuous function $f\in \mathcal{C}^d_T$ we will denote by $\tau_A(f)$ the first exit time 
$$
\tau_A(f)=\inf\{t>0: f(t)\not\in A\}.
$$
We recall that the set $G$ is assumed to be convex with a $C^2$ boundary in the neighborhood of its boundary point $x.$ Let us choose a unit vector $e\in \mathbb{R}^d$ and $r>0$ such that  $B(x+re,r)\subset G.$ Consider the half-space 
$$
H=\{y\in\mathbb{R}^d:(y-x)\cdot e>0\},
$$
so that 
$$
B(x+re,r)\subset G\subset H.
$$
Consider an auxiliary measure $\nu_{x,T}(\cdot;H)$ (see corollary \ref{cor3}). The corresponding process can be described as follows. Choose an orthonormal basis $\{e_1,\ldots,e_d\}$ in $\mathbb{R}^d,$ such that $e_1=e.$ Let $\{\tilde{Y}_1(t)\}_{0\leq t\leq T}$ be a Brownian meander, and $\{(\tilde{W}_2(t),\ldots,\tilde{W}_d(t))\}_{0\leq t\leq T}$ be a $\mathbb{R}^{d-1}-$valued Brownian motion independent from $\tilde{Y}_1$. Then $\nu_{x,T}(\cdot;H)$ is  the distribution of the process 
$\{x + \tilde{Y}_1(t)e_1+\sum^d_{i=2}\tilde{W}_i(t)e_i\}_{0\leq t\leq T}.$ 

By the corollary \ref{cor2}   $\nu_{x,T}(\cdot;H)$ is the distribution of the solution of the problem 
\begin{equation}
\label{eq17_8_1}
\begin{cases}
dY(t) =\nabla_y\log\gamma_H(T-t,Y(t))dt+dW(t) \\ 
Y(0)=x, \\ 
Y(t)\in H \ \mbox{ for a.a. } t\in(0,T)
\end{cases}  
\end{equation}
where $W$ is an $\mathbb{R}^d-$valued Brownian motion. By corollary \ref{cor3} the measure $\nu_{x,T}(\cdot;H)$ is equivalent to the distribution of the process $\{x + \tilde{Z}_1(t)e_1+\sum^d_{i=2}\tilde{W}_i(t)e_i\}_{0\leq t\leq T},$ where $\{\tilde{Z}_1(t)\}_{t\geq 0}$ is a three-dimensional Bessel process  independent from $\{(\tilde{W}_2(t),\ldots,\tilde{W}_d(t))\}_{0\leq t\leq T}.$  Applying \cite[Th. 3.4]{Burdzy} we deduce 
$$
\nu_{x,T}(\tau_{B(x+re,r)}(Y)>0;H)=1.
$$
Consequently,
$$
\nu_{x,T}(\{\tau_G(Y)>T\};H)>0
$$
and we can represent the measure $\nu_{x,T}(\cdot;G)$ via the density with respect to the measure $\nu_{x,T}(\cdot;H)$ (see \cite{Garbit} for the details):
$$
\frac{d\nu_{x,T}(\cdot;G)}{d\nu_{x,T}(\cdot;H)}=\frac{1_{\tau_G(Y)>T}}{\nu_{x,T}(\{\tau_G(Y)>T\};H)}.
$$
Let us apply the Girsanov theorem to this density. Introduce the function 
$$
\theta(t,y)=\mathbb{P}(\forall r\in[t,T] \ Y(r)\in G|Y(t)=y)=\frac{\gamma_G(T-t,y)}{\gamma_H(T-t,y)}, \ y\in G, 0\leq t<T.
$$
As in the proof of theorem \ref{thm3}, an application of the It\^o formula implies the Clark representation
$$
1_{\tau_G(Y)>T}=\theta(0,x)+\int^T_0 1_{\tau_G(Y)>s}(\nabla_y \theta(s,Y(s)),dW(s)).
$$
By the Markov property, we have 
$$
\mathbb{E}[1_{\tau_G(Y)>T}|\mathcal{F}_s]=1_{\tau_G(Y)>s}\theta(s,Y(s)).
$$
Repeating arguments of the theorem \ref{thm3}, under the measure $\nu_{x,T}(\cdot;G)$ the process 
$$
\tilde{W}(t)=W(t)-\int^t_0 \nabla_y \log \theta(s,Y(s))ds, 0\leq t\leq T,
$$
is a Brownian motion. From \eqref{eq17_8_1} we deduce that under the measure $\nu_{x}(\cdot;G)$ the process $Y$ satisfies the equation
$$
dY(t)=\nabla_y\log\gamma_H(T-t,Y(t))dt+\nabla_y \log \theta(t,Y(t))dt+d\tilde{W}(t)=
$$
$$
=\nabla_y \log \gamma_G(T-t,Y(t))dt+\tilde{W}(t).
$$

It remains to check pathwise uniqueness for the problem \eqref{eq1}. Let $Y$ and $\tilde{Y}$ solve \eqref{eq1}. Then 
$$
\frac{1}{2}\partial_t |Y(t)-\tilde{Y}(t)|^2=\left(Y(t)-\tilde{Y}(t),\nabla_y\log\gamma_G(T-t,Y(t))-\nabla_y\log\gamma_G(T-t,\tilde{Y}(t))\right)\leq 0,
$$
where the last inequality follows from log-concavity of the function $y\to \gamma_G(T-t,y)$ \cite{BL}.

\end{proof}

\section{Clusters in coalescing stochastic flows}

\subsection{Arratia flow}

In this section we will use duality theory for coalescing stochastic flows on the real line developed in \cite{Riabov_duality}. By a backward stochastic flow we will understand a family $\{\phi_{t,s}:-\infty<s\leq t<\infty\}$ of measurable random mappings of $\mathbb{R},$ such that the family $\{\hat{\phi}_{s,t}=\phi_{-s,-t}:-\infty<s\leq t<\infty\}$ is a stochastic flow. Let $\psi=\{\psi_{s,t}:-\infty<s\leq t<\infty\}$ be the Arratia flow. A dual flow $\tilde{\psi}=\{\tilde{\psi}_{t,s}:-\infty<s\leq t<\infty\}$ is defined as a backward stochastic flow whose trajectories do not cross trajectories of the flow $\psi,$ i.e. for all $s\leq t, x,y\in\mathbb{R}$ and $\omega\in\Omega$
$$
(\psi_{s,t}(\omega,x)-y)(x-\tilde{\psi}_{t,s}(\omega,y))\geq 0.
$$
For the needed properties of the Arratia flow as well as for existence and properties of its dual we refer to \cite{Riabov_RDS, Riabov_duality}. In particular, we recall that the dual $\tilde{\psi}$ of the Arratia flow $\psi$ is itself the Arratia flow (with time reversed). As it was mentioned in the Introduction, the image $\psi_{0,T}(\mathbb{R})$ is a locally finite subset of $\mathbb{R}$ unbounded from below and from above. Let us fix $\omega$ for a while. With every point $\zeta\in \psi_{0,T}(\omega,\mathbb{R})$ we associate a cluster 
$$
K_\zeta=\cup_{t\in [0,T]}\{(T-t,x):\psi_{T-t,T}(\omega, x)=\zeta\}.
$$
By $\alpha_\zeta$ and $\beta_\zeta$ we denote the lower and the upper boundaries of the cluster $K_\zeta:$
$$
\alpha_\zeta(t)=\inf\{x\in \mathbb{R}:(T-t,x)\in K_\zeta\}, \ \beta_\zeta(t)=\sup\{x\in \mathbb{R}:(T-t,x)\in K_\zeta\}.
$$
This natural definition of $\alpha_\zeta, \beta_\zeta$ is not a rigorous definition of a stochastic processes, as the choice of the random quantity $\zeta$ is not specified. In the following lemma we overcome this issue and simultaneously define the conditional distribution of boundary processes conditioned on the event $\{\zeta=x\}.$ 

\begin{lemma}
\label{lem17_1} With probability 1 for all $\zeta\in \psi_{0,T}(\mathbb{R})$
$$
\lim_{x\to \zeta+}\sup_{t\in [0,T]}|\tilde{\psi}_{T,T-t}(x)-\beta_\zeta(t)|=0,
$$
and
$$
\lim_{x\to \zeta-}\sup_{t\in [0,T]}|\tilde{\psi}_{T,T-t}(x)-\alpha_\zeta(t)|=0.
$$
\end{lemma}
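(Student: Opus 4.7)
By the reflection $x \mapsto -x$, it suffices to prove the first statement, for $\beta_\zeta$. The plan is first to sandwich $\tilde{\psi}_{T,T-t}(x)$ between $\beta_\zeta(t)$ and $\beta_\zeta(t)+\varepsilon$ via the non-crossing duality, and then to upgrade pointwise convergence to uniform convergence via a Borel--Cantelli argument driven by the coalescence estimates in the dual Arratia flow.

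For the lower bound, I would fix $t \in (0,T]$, pick $z \in (\alpha_\zeta(t), \beta_\zeta(t))$ so that $\psi_{T-t,T}(z) = \zeta$, and apply the non-crossing inequality to the forward trajectory from $(z, T-t)$ and the dual trajectory from $(x, T)$ with $x > \zeta$:
$$(\psi_{T-t,T}(z) - x)(z - \tilde{\psi}_{T,T-t}(x)) = (\zeta - x)(z - \tilde{\psi}_{T,T-t}(x)) \geq 0.$$
Since $\zeta < x$, this forces $z \leq \tilde{\psi}_{T,T-t}(x)$; sending $z \uparrow \beta_\zeta(t)$ gives $\tilde{\psi}_{T,T-t}(x) \geq \beta_\zeta(t)$. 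For the matching upper bound I would set $z = \beta_\zeta(t)+\varepsilon$ and $y^{*} := \psi_{T-t,T}(z)$; by the definition of $\beta_\zeta$ and the step-function structure of $\psi_{T-t,T}(\cdot)$, $y^{*} > \zeta$. The same non-crossing inequality applied to the forward trajectory from $(z, T-t)$ and the dual trajectory from $(x, T)$ with $x \in (\zeta, y^{*})$ yields $\tilde{\psi}_{T,T-t}(x) \leq \beta_\zeta(t)+\varepsilon$. Sending $x \to \zeta+$ and then $\varepsilon \to 0$ produces the pointwise identity $\lim_{x \to \zeta+}\tilde{\psi}_{T,T-t}(x) = \beta_\zeta(t)$ for every $t \in [0,T]$.

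To promote this to uniform convergence I would set $x_n = \zeta + 2^{-n}$ and $f_n(t) = \tilde{\psi}_{T,T-t}(x_n)$. Each $f_n$ is continuous in $t$, and $f_n \geq f_{n+1}$ by monotonicity of the dual flow in the starting point. The coalescence property of the Arratia flow implies that, prior to the random time when $\tilde{\psi}_{T,\cdot}(x_n)$ and $\tilde{\psi}_{T,\cdot}(x_{n+1})$ meet, the difference $f_n - f_{n+1}$ evolves as a Brownian motion started at $2^{-n-1}$ and absorbed at $0$, so the gambler-ruin estimate yields
$$\mathbb{P}\Bigl(\sup_{t \in [0,T]} \bigl(f_n(t) - f_{n+1}(t)\bigr) > 2^{-n/2}\Bigr) \leq 2^{-n/2-1},$$
which is summable in $n$. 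Borel--Cantelli then shows that $\{f_n\}$ is almost surely Cauchy in $C([0,T])$, so it converges uniformly to a continuous limit that by the pointwise step above must equal $\beta_\zeta$. For a general sequence $x \to \zeta+$ the sandwich $\beta_\zeta(t) \leq \tilde{\psi}_{T,T-t}(x) \leq f_n(t)$ for $x \leq x_n$ closes the argument. Alternatively, once continuity of $\beta_\zeta$ is available, Dini's theorem applied to $f_n \downarrow \beta_\zeta$ on the compact interval $[0,T]$ delivers the same conclusion. The ``with probability $1$ for all $\zeta \in \psi_{0,T}(\mathbb{R})$'' statement then follows by running the argument simultaneously along countably many rational starting points and using that $\psi_{0,T}(\mathbb{R})$ is almost surely countable. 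The symmetric argument with $x \to \zeta-$ handles $\alpha_\zeta$.

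The main anticipated obstacle is the uniform-in-$t$ step: the non-crossing property alone delivers only pointwise information, so turning it into uniform convergence requires both the quantitative coalescence estimate above and a careful measurable reduction to a deterministic countable set of starting points, needed to handle the ``for all $\zeta$'' quantifier over the random set $\psi_{0,T}(\mathbb{R})$.
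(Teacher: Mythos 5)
Your pointwise argument matches the paper's in substance: the paper establishes the lower bound $\beta_\zeta(t)\leq\tilde\psi_{T,T-t}(x)$ for all $x>\zeta$ by a contradiction version of your non-crossing computation, and it proves $\inf_{x>\zeta}\tilde\psi_{T,T-t}(x)=\beta_\zeta(t)$ by essentially your $\varepsilon$-above-$\beta_\zeta(t)$ argument (again by contradiction). This part of your proposal is fine, and since the non-crossing inequality holds a.s. simultaneously for all $x,y,s,t$, it automatically covers all $\zeta\in\psi_{0,T}(\mathbb{R})$ at once.

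Where the proposal diverges is the upgrade to uniform convergence. The paper does this by quoting continuity of $\alpha_\zeta,\beta_\zeta$ from \cite{Riabov_duality} and then (implicitly) invoking Dini's theorem for the decreasing sequence of continuous paths $f_n(t)=\tilde\psi_{T,T-t}(x_n)$ converging pointwise to the continuous $\beta_\zeta$ on the compact $[0,T]$ --- i.e.\ the ``alternative'' you mention at the end, which is the clean route. Your primary Borel--Cantelli/gambler's-ruin route has a genuine gap: the starting points $x_n=\zeta+2^{-n}$ are random and measurable with respect to the flow itself, so the gambler's-ruin bound on $f_n-f_{n+1}$ (which is a statement about the dual flow launched from \emph{fixed} points) does not apply directly, and the joint law of $(\zeta,\tilde\psi_{T,\cdot})$ would have to be disentangled. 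Your proposed fix of reducing to rational starting points does not repair this, because $\psi_{0,T}(\mathbb{R})$ is a.s.\ a discrete set of irrational points, so no fixed countable grid contains the relevant $x_n$. (There is also a small quantitative slip: $f_n-f_{n+1}$ has quadratic variation $2t$, not $t$, but that only shifts constants.) In short: the pointwise half is correct and matches the paper; for the uniformity half you should drop the coalescence-estimate route and commit to the Dini argument, which is what the paper does.
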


\begin{proof}
For continuity of $\alpha_\zeta,$ $\beta_\zeta$ we refer to \cite{Riabov_duality}.  Let $x>\zeta$ and $t\in [0,T].$ If $\tilde{\psi}_{T,T-t}(x)<\beta_\zeta(t),$ then there exists $y>\tilde{\psi}_{T,T-t}(x)$ such that 
$$
\psi_{T-t,T}(y)=\zeta<x,
$$
which contradicts duality. So, for all $x>\zeta$ and all $t\in [0,T],$
$$
\beta_\zeta(t)\leq \tilde{\psi}_{T,T-t}(x).
$$
It remains to check that for all $t\in[0,T]$
$$
\inf_{x>\zeta} \tilde{\psi}_{T,T-t}(x)=\beta_\zeta(t).
$$
Assume that $\inf_{x>\zeta} \tilde{\psi}_{T,T-t}(x)>\beta_\zeta(t)$ and let $y\in (\beta_\zeta(t),\inf_{x>\zeta} \tilde{\psi}_{T,T-t}(x)).$ For every $x>\zeta$ duality implies that 
$\psi_{T-t,T}(y)\leq x.$ Hence, $\psi_{T-t,T}(y)\leq \zeta.$ But the latter contradicts $y>\beta_\zeta(t).$ The proof for $\alpha_\zeta$ is similar.

\end{proof}

Observe the equality of events 
$$
\{(u,v)\cap \psi_{0,T}(\mathbb{R})\ne \emptyset\}=\{\tilde{\psi}_{T,0}(u)<\tilde{\psi}_{T,0}(v)\},
$$
the latter event being the event that two independent $\mathbb{R}-$valued Brownian motions started at $u$ and $v$ and haven't met during the time $T$. Combining this consideration with results of Lemma \ref{lem17_1} and Theorem \ref{thm1}, we get the corollary.

\begin{corollary}
\label{cor17_1} 
Conditional distribution of the process 
$$
\{(\tilde{\psi}_{T,T-t}(u),\tilde{\psi}_{T,T-t}(v))\}_{t\in [0,T]}
$$
conditionally on the event $\{(u,v)\cap \psi_{0,T}(\mathbb{R})\ne \emptyset\}$ weakly converge as $u\to x-, v\to x+$ to the solution of the problem 
$$
\begin{cases}
dY(t)=\nabla_y\log\gamma_H(T-t,Y(t))dt+dW(t), \\
Y(0)=(x,x), \\
Y(t)\in H \mbox{ for a.a. } t\in (0,T),
\end{cases}
$$
where $H=\{y\in\mathbb{R}^2:y_1<y_2\},$ $W$ is a standard $\mathbb{R}^2-$valued Brownian motion, and $\gamma_H$ is defined in \eqref{28_08_eq3}.

\end{corollary}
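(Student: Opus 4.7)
The plan is to reduce the corollary to two ingredients that are already in place: the identification of the conditional law of the dual pair with the half-plane meander measure $\nu_{(u,v),T}(\cdot;H)$, and the continuity of $\nu_{\cdot,T}(\cdot;H)$ in the starting point. Once these are granted, Theorem \ref{thm1} finishes the argument.

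First, fixing $u<x<v$, I would describe the joint law of the dual pair
$$
(U(t),V(t))=(\tilde{\psi}_{T,T-t}(u),\tilde{\psi}_{T,T-t}(v)),\quad t\in[0,T].
$$
Since the dual $\tilde{\psi}$ is itself an Arratia flow (with time reversed), its two coordinates are $\mathbb{R}$-valued Brownian motions that evolve as independent standard Brownian motions up to the coalescence time $\tau_{uv}=\inf\{t\geq 0:U(t)=V(t)\}$ and coincide thereafter. Therefore $(U,V)$ stopped at $\tau_{uv}$ has the same law as a planar Brownian motion started at $(u,v)$ and stopped at its first exit from $H=\{y_1<y_2\}$. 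By the equality of events pointed out just before the corollary, the conditioning event $\{(u,v)\cap\psi_{0,T}(\mathbb{R})\ne\emptyset\}=\{\tilde{\psi}_{T,0}(u)<\tilde{\psi}_{T,0}(v)\}$ coincides with $\{\tau_{uv}>T\}$, which for the planar Brownian motion is exactly the event of not leaving $H$ during $[0,T]$. Hence the conditional law of $(U,V)$ given this event is precisely $\nu_{(u,v),T}(\cdot;H)$.

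Next I would let $u\to x-,\ v\to x+$, so that $(u,v)\in H$ converges to $(x,x)\in\partial H$ from inside $H$. The set $H$ is open and convex with a $C^{\infty}$ boundary, so the weak continuity statement \cite[Th.~4.1]{Garbit} recalled in the introduction yields weak convergence $\nu_{(u,v),T}(\cdot;H)\to\nu_{(x,x),T}(\cdot;H)$ in $\mathcal{C}^2_T$. Finally, Theorem \ref{thm1} applied to $G=H$ at the boundary point $(x,x)$ identifies $\nu_{(x,x),T}(\cdot;H)$ with the distribution of the unique strong solution of the SDE in the statement.

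The only delicate step is the identification in the first paragraph: to recognize that the dual pair, stopped at its coalescence time, is a planar Brownian motion, one has to invoke the finite-dimensional description of the dual flow from \cite{Riabov_duality} together with the characterization of Arratia trajectories as independent Brownian motions up to their meeting time. Once this is granted, the remaining two steps are direct appeals to \cite[Th.~4.1]{Garbit} and to Theorem \ref{thm1}, and no further computation is required at the level of this corollary (the explicit drift appearing in Theorem \ref{thm2} is then obtained by writing out $\nabla_y\log\gamma_H$ in closed form).
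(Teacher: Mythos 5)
Your proof is correct and follows essentially the same route as the paper's (very terse) argument: identify the conditional law of the dual pair with $\nu_{(u,v),T}(\cdot;H)$ via the equality of events, invoke weak continuity of $(u,v)\mapsto\nu_{(u,v),T}(\cdot;H)$ up to the diagonal (Garbit, or equivalently the DIM convergence used in the proof of Theorem~\ref{thm3} combined with independence of the remaining coordinate), and identify the limiting measure $\nu_{(x,x),T}(\cdot;H)$ with the SDE solution via Theorem~\ref{thm1} (or directly Corollary~\ref{cor2}). The one cosmetic difference is that you do not cite Lemma~\ref{lem17_1}, which the paper mentions; that lemma is in fact not needed for the weak-convergence statement of the corollary itself, but serves the subsequent identification of the limit with the conditional law of the boundary processes $(\alpha_\zeta,\beta_\zeta)$.
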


Direct computation gives $\gamma_H(t,y)=\sqrt{\frac{2}{\pi}}E\left(\frac{y_2-y_1}{\sqrt{2t}}\right),$ where $E(x)=\int^x_0 e^{-\frac{u^2}{2}}du.$ Consequently, we can identify the conditional law of boundaries $(\alpha_\zeta,\beta_\zeta)$ given that $\{\zeta=x\}$ via the problem
$$
\begin{cases}
d\alpha_\zeta(t)=-\frac{e^{-\frac{(\beta_\zeta(t)-\alpha_\zeta(t))^2}{4(T-t)}}}{\sqrt{2(T-t)}E(\frac{\beta_\zeta(t)-\alpha_\zeta(t)}{\sqrt{2(T-t)}})}dt+dW_1(t) \\
d\beta_\zeta(t)=\frac{e^{-\frac{(\beta_\zeta(t)-\alpha_\zeta(t))^2}{4(T-t)}}}{\sqrt{2(T-t)}E(\frac{\beta_\zeta(t)-\alpha_\zeta(t)}{\sqrt{2(T-t)}})}dt+dW_2(t) \\
\alpha_\zeta(0)=\beta_\zeta(0)=x \\
\alpha_\zeta(t)<\beta_\zeta(t) \mbox{ for a.a. } t\in (0,T)
\end{cases}.
$$

\subsection{Arratia flow with drift}

In this section the developed approach is adapted to the unbounded cluster in the Arratia flow with drift. Let $a:\mathbb{R}\to\mathbb{R}$ be a Lipschitz function. Consider a SDE
\begin{equation}
\label{4_8_eq1}
dX(t)=a(X(t))dt+dw(t),
\end{equation}
where $w$ is a Wiener process. Informally, the Arratia flow with drift describes the joint motion of solutions of the equation \eqref{4_8_eq1} that start from all points of the real line at every moment of time, move independently before the meeting time and coalesce at the meeting time. Precisely, we say that a coalescing stochastic flow $\psi=\{\psi_{s,t}:-\infty<s\leq t<\infty\}$ is the Arratia flow with drift $a,$ if the following condition is satisfied:

For any $n\geq 1$ and  $x_1<\ldots<x_n$  let $\{(X_1(t),\ldots,X_n(t))\}_{t\geq 0}$ be the solution of the problem 
$$
\begin{cases}
dX_i(t)=a(X_i(t))dt+dW_i(t) \\
X_i(0)=x_i
\end{cases}, \ 1\leq i\leq n,
$$
where $W_1,\ldots,W_n$ are independent standard $\mathbb{R}-$valued Brownian motions. Denote  $\sigma=\inf\{t\geq 0: \exists i\ne j \ X_i(t)=X_j(t)\}.$ Further, let $\{(\psi_{s,s+t}(x_1),\ldots,\psi_{s,s+t}(x_n))\}_{t\geq 0}$ be the $n-$point motion of the flow started at time $s$ from points $x_1,\ldots,x_n.$ Denote $\tau=\inf\{t\geq 0: \exists i\ne j \ \psi_{s,s+t}(x_i)=\psi_{s,s+t}(x_j)\}.$ Then $\mathbb{R}^n-$valued processes
$$
t\to (\psi_{s,s+t\wedge \tau}(x_1),\ldots,\psi_{s,s+t\wedge \tau}(x_n))
$$
and 
$$
t\to (X_1(t\wedge \sigma),\ldots, X_n(t\wedge \sigma))
$$
are identically distributed. 

For the existence of the Arratia flow with drift we refer to \cite{Riabov_RDS}. When the drift $a$ is strictly monotone, an infinite cluster arises in the flow $\psi.$

\begin{theorem}
\label{thm18_1} \cite{DRS}
Let $\psi$ be the Arratia flow with drift $a.$ Assume that the drift $a$ is Lipschitz and for some $\lambda>0$ and all $x,y\in \mathbb{R}$ one has
$$
(a(x)-a(y))(x-y)\leq -\lambda (x-y)^2.
$$
Then there exists a unique stationary process $(\eta_t)_{t\in \mathbb{R}}$ such that for all $s\leq t$ and all $\omega$
$$
\psi_{s,t}(\omega,\eta_s(\omega))=\eta_t(\omega).
$$
\end{theorem}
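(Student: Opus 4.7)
The strategy is to construct $\eta_t$ as a pullback limit from $-\infty$. Fix a reference point $x_0\in\mathbb{R}$ and propose
$$
\eta_t:=\lim_{n\to\infty}\psi_{-n,t}(x_0),
$$
the limit being claimed in probability, uniformly on compact time windows. Once existence and independence of the choice of $x_0$ are granted, the flow identity $\psi_{-m,t}(x_0)=\psi_{s,t}(\psi_{-m,s}(x_0))$ together with continuity of $y\mapsto\psi_{s,t}(y)$ on the locally finite image $\psi_{-m,s}(\mathbb{R})$ gives $\psi_{s,t}(\eta_s)=\eta_t$ upon sending $m\to\infty$, while the time-homogeneity axiom (item (3) in the definition of the flow) carries over to the limit and yields stationarity of $\{\eta_t\}_{t\in\mathbb{R}}$.

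The main obstacle is the Cauchy property of $\{\psi_{-n,t}(x_0)\}_n$. By the flow property, for $n<m$ one has $\psi_{-m,t}(x_0)=\psi_{-n,t}(X_n)$ with $X_n:=\psi_{-m,-n}(x_0)$, so one must show that the two trajectories issued at time $-n$ from $x_0$ and from the random point $X_n$ coalesce by time $t$ with probability tending to $1$ as $n\to\infty$. Let $\tau_n$ denote their coalescence time. Before $\tau_n$ these two trajectories solve \eqref{4_8_eq1} with \emph{independent} Brownian drivers, so the difference $D(s)=\psi_{-n,s}(x_0)-\psi_{-n,s}(X_n)$ satisfies
$$
dD(s)^2=2D(s)\bigl(a(\psi_{-n,s}(x_0))-a(\psi_{-n,s}(X_n))\bigr)ds+2\,ds+dM(s),
$$
and the strong monotonicity hypothesis bounds the drift term by $-2\lambda D(s)^2\,ds$. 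This yields the $L^2$ confinement $\mathbb{E}[D(s)^2\mid X_n]\leq e^{-2\lambda(s+n)}(x_0-X_n)^2+\lambda^{-1}$ up to $\tau_n$, and a Lyapunov/hitting-time analysis of the resulting one-dimensional dissipative diffusion driven by non-degenerate noise produces a coalescence tail of the form $\mathbb{P}(\tau_n>t\mid X_n=y)\leq C(1+y^2)e^{-c(t+n)}$. Dissipativity of the one-point motion of $\psi$ keeps $\mathbb{E}X_n^2$ bounded uniformly in $m>n$ (the SDE \eqref{4_8_eq1} is positive recurrent with a tight family of laws), so integrating over $X_n$ gives $\mathbb{P}(\psi_{-m,t}(x_0)\neq\psi_{-n,t}(x_0))\to 0$ uniformly in $m>n$. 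The same coalescence estimate applied with $x_0$ replaced by $x_0'$ shows that the limit is independent of the reference point.

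For uniqueness, if $\tilde\eta$ is another stationary process with $\psi_{s,t}(\tilde\eta_s)=\tilde\eta_t$ for all $s\leq t$, then $\tilde\eta_t=\psi_{-n,t}(\tilde\eta_{-n})$ for every $n$; applying the same coalescence estimate to the trajectories started at time $-n$ from $x_0$ and from the random point $\tilde\eta_{-n}$ — whose law is independent of $n$ by stationarity and has finite second moment because every stationary law of \eqref{4_8_eq1} does — gives $\mathbb{P}(\tilde\eta_t\neq\psi_{-n,t}(x_0))\to 0$ and hence $\tilde\eta_t=\eta_t$ a.s. Thus the whole proof hinges on one analytical input, namely a quantitative coalescence tail for two Arratia-flow trajectories issued from a tight random gap, in which the interplay between the dissipative drift and the Brownian noise has to be made effective; this is the step I expect to be the technical heart of the argument.
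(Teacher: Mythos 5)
The paper does not prove Theorem~\ref{thm18_1}; it is cited from \cite{DRS}, so there is no in-paper proof to compare against. Your pullback construction $\eta_t=\lim_{n\to\infty}\psi_{-n,t}(x_0)$ together with a quantitative coalescence estimate is the natural random-dynamical-systems approach and matches what one expects \cite{DRS} to do (the result is stated there in the language of random attractors and stationary points). The skeleton is sound: the flow property reduces the Cauchy question to coalescence of two trajectories issued at time $-n$ from $x_0$ and from $X_n=\psi_{-m,-n}(x_0)$; before coalescence the drivers are independent, so the difference $D$ is a one-dimensional diffusion with dissipative drift $\leq -\lambda D$ and quadratic variation $2t$; and dissipativity of the one-point motion keeps $X_n$ tight in $L^2$ uniformly in $m,n$.

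Two remarks. First, for a \emph{coalescing} flow you can and should claim more than convergence in probability: once $\psi_{-(n+1),t}(x_0)=\psi_{-n,t}(x_0)$ for some $n$ it holds for that $n$ onwards, and your exponential tail for the coalescence time combined with Borel--Cantelli shows the sequence $\{\psi_{-n,t}(x_0)\}_n$ is a.s.\ \emph{eventually constant}. This is worth exploiting because it sidesteps the continuity issue you invoke when passing to the limit in $\psi_{-m,t}(x_0)=\psi_{s,t}(\psi_{-m,s}(x_0))$: for the Arratia flow $y\mapsto\psi_{s,t}(\omega,y)$ is a piecewise-constant increasing function, hence not continuous, and the phrase ``continuity on the locally finite image'' does not quite do the work; eventual constancy of $\psi_{-m,s}(x_0)$ makes the identity $\psi_{s,t}(\eta_s)=\eta_t$ immediate. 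Second, the tail bound you posit, $\mathbb{P}(\tau_n>t\mid X_n=y)\leq C(1+y^2)e^{-c(t+n)}$, is the genuine technical heart, and as stated it is only a placeholder: you would need to justify that the non-degenerate dissipative diffusion $D$ hits zero with a tail that is exponential uniformly in the starting point (standard via a two-step argument, first contracting to a compact set, then a uniform hitting probability from that set, iterated), and you should say why the pre-coalescence law of the pair is exactly that of two independent solutions of \eqref{4_8_eq1} --- this is precisely the defining property of the Arratia flow with drift given just above the theorem, so it is available, but it must be cited. With those two points addressed the argument is complete in outline, and it is the right route.
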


Further we assume that the drift $a$ satisfies assumptions of the theorem \ref{thm18_1}. The process $(\eta_t)_{t\geq 0}$ represents the motion of a stationary point in the flow. In particular, the one-dimensional distribution of $(\eta_t)_{t\geq 0}$ is given by the stationary distribution of the equation \eqref{4_8_eq1}: 
$$
\mathbb{P}(\eta_t\in \Delta)=C\int_{\Delta}e^{2\int^x_0 a(y)dy}dx,
$$
where $C=\left(\int^\infty_{-\infty}e^{2\int^x_0 a(y)dy}dx \right)^{-1}.$ An infinite cluster can be associated with $\eta_0.$ Namely, at every moment $t\geq 0$ there exists an interval of points that have coalesced into $\eta_0$ at time $0:$
$$
K_0(t)=\{x\in \mathbb{R}:\psi_{-t,0}(x)=\eta_0\}, \ t\geq 0.
$$
The set $K_0=\cup_{t\geq 0}(\{-t\}\times K_0(t))$ will be called the cluster with the vertex $\eta_0.$ Let us introduce boundary processes
$$
\alpha_0(t)=\inf K_0 (t), \ \beta_0(t)=\sup K_0 (t).
$$
The theorem \ref{thm18_2} describes the conditional distribution of processes $(\alpha_0(t),\beta_0 (t))$ conditioned on the event $\{\eta_0=x\}.$ The following analogue of the lemma \ref{lem17_1} follows from properties of the dual flow $\{\tilde{\psi}_{t,s}:-\infty<s\leq t<\infty\}$ obtained in \cite{Riabov_duality}.

\begin{lemma}
\label{lem18_1}   With probability 1 for all $T\geq 0$
$$
\lim_{x\to \eta_0+}\sup_{t\in [0,T]}|\tilde{\psi}_{0,-t}(x)-\beta_0(t)|=0,
$$
and
$$
\lim_{x\to \eta_0-}\sup_{t\in [0,T]}|\tilde{\psi}_{0,-t}(x)-\alpha_0(t)|=0.
$$
\end{lemma}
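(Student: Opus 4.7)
The plan is to reproduce the proof of Lemma \ref{lem17_1} with the forward Arratia flow on $[0,T]$ replaced by the backward segment $[-T,0]$ of the Arratia flow with drift, using the duality relation $(\psi_{s,t}(x)-y)(x-\tilde{\psi}_{t,s}(y))\geq 0$ together with continuity of the boundary processes $\alpha_0$ and $\beta_0$, both established in \cite{Riabov_duality}. The proof splits into three steps: a one-sided inequality from duality, matching infimum, and an upgrade to uniform convergence via Dini's theorem.

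First I would fix $T>0$ and show that for every $x>\eta_0$ and $t\in[0,T]$, $\beta_0(t)\leq \tilde{\psi}_{0,-t}(x)$. Suppose instead $\tilde{\psi}_{0,-t}(x)<\beta_0(t)$. By definition of $\beta_0$, there exists $y_0\in K_0(t)$ with $\tilde{\psi}_{0,-t}(x)<y_0\leq \beta_0(t)$, so that $\psi_{-t,0}(y_0)=\eta_0<x$. Applying the duality relation with the pair $(y_0,x)$ gives $(\eta_0-x)(y_0-\tilde{\psi}_{0,-t}(x))\geq 0$, contradicting the signs. Next I would show that $\inf_{x>\eta_0}\tilde{\psi}_{0,-t}(x)=\beta_0(t)$. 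If the infimum strictly exceeded $\beta_0(t)$, any $y$ strictly between them would satisfy $\psi_{-t,0}(y)\leq x$ for every $x>\eta_0$ by duality, hence $\psi_{-t,0}(y)\leq \eta_0$, contradicting $y>\beta_0(t)$. The argument for $\alpha_0$ is entirely symmetric.

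To upgrade the pointwise monotone convergence to uniform convergence on $[0,T]$ I would invoke Dini's theorem. For fixed $t$, the map $x\mapsto \tilde{\psi}_{0,-t}(x)$ is nondecreasing (backward trajectories of a coalescing flow preserve order), so $\{t\mapsto \tilde{\psi}_{0,-t}(x):x>\eta_0\}$ is monotone in $x$ and converges pointwise down to $\beta_0(t)$ by the previous step. Each function $t\mapsto \tilde{\psi}_{0,-t}(x)$ is continuous by property (4) of the flow, and the limit $t\mapsto \beta_0(t)$ is continuous by \cite{Riabov_duality}. Dini's theorem applied on the compact interval $[0,T]$ then yields the desired uniform convergence on a full-probability event depending on $T$. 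The statement for all $T\geq 0$ simultaneously follows by taking a countable exhausting sequence $T_n\uparrow \infty$ inside the intersection of the corresponding full-probability events.

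The main (and rather minor) obstacle is the continuity of $\beta_0$ needed for Dini; this is imported from \cite{Riabov_duality}, exactly as was done in Lemma \ref{lem17_1}. Once that is in hand, the argument is a verbatim adaptation of Lemma \ref{lem17_1}, with the only substantive change being the replacement of the finite horizon $T$ by an arbitrary $T\geq 0$ handled by exhaustion.
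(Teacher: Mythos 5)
Your argument is correct and is essentially the route the paper intends: the paper offers no separate proof of this lemma, merely asserting that it is the analogue of Lemma~\ref{lem17_1} and ``follows from properties of the dual flow'' cited from \cite{Riabov_duality}. You carry out exactly that adaptation (the duality sign argument for the one-sided bound and for the matching infimum transfers verbatim to $\psi_{-t,0}$ and $\tilde{\psi}_{0,-t}$), and you additionally make explicit the Dini step that upgrades the pointwise monotone limit to uniform convergence on $[0,T]$ and the countable exhaustion handling ``all $T\geq 0$'' --- both of which the paper leaves implicit (the uniformity issue is already latent in Lemma~\ref{lem17_1}, where the paper only establishes pointwise convergence of $\inf_{x>\zeta}\tilde{\psi}_{T,T-t}(x)$ and refers to continuity of $\alpha_\zeta,\beta_\zeta$ for the rest).
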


In \cite{DRS} it was proved that with probability 1, 
$$
\lim_{t\to \infty}\beta_0(t)=\infty, \lim_{t\to \infty}\alpha_0(t)=-\infty.
$$
Hence, the following equality of events holds:
$$
\{u<\eta_0<v\}=\{\lim_{t\to \infty}\tilde{\psi}_{0,-t}(u)=-\infty, \lim_{t\to \infty}\tilde{\psi}_{0,-t}(v)=\infty \}
$$
Let 
$$
\theta(y_1,y_2)=\mathbb{P}(\eta_0\in (y_1,y_2))=\int^{y_2}_{y_1} \pi(x)dx.
$$

\begin{theorem}
\label{thm18_2} 
Conditional distribution of the process 
$$
\{(\tilde{\psi}_{0,-t}(u),\tilde{\psi}_{0,-t}(v))\}_{t\geq 0}
$$
conditionally on the event $\{u<\eta_0<v\},$  weakly converge as $u\to x-, v\to x+$ to the solution of the problem 
\begin{equation}
\label{eq18_1}
\begin{cases}
dY_1(t)=\left(-a(Y_1(t))+\frac{\partial \log\theta(Y_1(t),Y_2(t))}{\partial y_1}\right)dt+dW_1(t), \\
dY_2(t)=\left(-a(Y_2(t))+\frac{\partial \log\theta(Y_1(t),Y_2(t))}{\partial y_2}\right)dt+dW_2(t), \\
Y_1(0)=Y_2(0)=x, \\
Y_1(t)<Y_2(t) \mbox{ for a.a. } t>0,
\end{cases}
\end{equation}
where $W$ is a standard $\mathbb{R}^2-$valued Brownian motion.

\end{theorem}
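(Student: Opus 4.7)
The plan is to mirror the proof of Corollary \ref{cor17_1} and Theorem \ref{thm2}, replacing the driftless dual by the dual of the Arratia flow with drift and replacing $\gamma_H$ by the explicit stationary probability $\theta(y_1,y_2)=\int_{y_1}^{y_2}\pi(x)\,dx$. By Lemma \ref{lem18_1}, the boundaries $(\alpha_0,\beta_0)$ are the limits, as $u\uparrow\eta_0$ and $v\downarrow\eta_0$, of the dual pair $(\tilde\psi_{0,-t}(u),\tilde\psi_{0,-t}(v))$, so it suffices to identify the weak limit, as $u\to x-$ and $v\to x+$, of the conditional law of this pair given $\{u<\eta_0<v\}$.

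I first record the unconditional law of the dual pair. By the noncrossing characterization of $\tilde\psi$, for a single starting point $y$ the process $X(t)=\tilde\psi_{0,-t}(y)$ solves $dX=-a(X)\,dt+dW$, and two trajectories started at $u<v$ are independent before their coalescence time $\sigma$. The equivalence of events displayed just before Theorem \ref{thm18_2} together with the Markov property of the dual pair yields, on $\{t<\sigma\}$,
$$
\mathbb{P}\big(\{u<\eta_0<v\}\,\big|\,\mathcal{F}_t\big)=\theta\big(\tilde\psi_{0,-t}(u),\tilde\psi_{0,-t}(v)\big).
$$
A direct computation using $\pi'(x)=2a(x)\pi(x)$ verifies that $\theta$ is harmonic on $H=\{y_1<y_2\}$ for the generator $\frac{1}{2}\Delta-a(y_1)\partial_{y_1}-a(y_2)\partial_{y_2}$ and vanishes on the diagonal. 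Hence $M_t=\theta(\tilde\psi_{0,-t\wedge\sigma}(u),\tilde\psi_{0,-t\wedge\sigma}(v))/\theta(u,v)$ is a positive martingale, $M_t\,dP^{(u,v)}$ coincides with the conditional law of interest, and Girsanov's theorem (applied as in the proofs of Theorems \ref{thm3} and \ref{thm1}) shows that under this measure the pair satisfies the SDE \eqref{eq18_1} with initial datum $(u,v)$.

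The passage to the limit $(u,v)\to(x,x)$ then follows the weak-compactness / Skorokhod / Scheff\'e chain of Section 2. Tightness of the family of conditioned laws is provided by Lemma \ref{lem18_1} combined with the identification above, and Scheff\'e's lemma applied componentwise together with harmonicity of $\theta$ and an identity of the form $\mathbb{E}\int_0^t\partial_{y_i}\log\theta(Y(s))\,ds=\mathbb{E}Y_i(t)+\mathbb{E}\int_0^t a(Y_i(s))\,ds-y_i$ yields $L^1$-convergence of the drifts, giving the SDE in the limit. For uniqueness I would exploit log-concavity: $(\log\pi)''=2a'\leq -2\lambda$ shows $\pi$ is log-concave, Pr\'ekopa's theorem then yields log-concavity of $\theta$ on $H$, and combined with the Lipschitz bound on $a$ this gives a one-sided Lipschitz bound for the total drift; a Yamada--Watanabe argument as in Theorems \ref{thm3} and \ref{thm1} then produces pathwise uniqueness on every compact $[0,T]$.

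The main obstacle is the passage to the limit at the singular corner $(x,x)$, where $\nabla\log\theta$ blows up. The Scheff\'e argument requires uniform $L^{3/2}$-integrability of $\partial_{y_i}\log\theta(Y^{(u,v)}(s))$ on each compact subinterval of $(0,T)$, and this needs explicit control of the conditioned marginal densities of $Y_i(t)$ in terms of the drifted diffusion killed at the diagonal, in analogy with the explicit Gaussian computation of Lemma \ref{lem1} but with the Gaussian transition density replaced by that of $dX=-a(X)\,dt+dW$. A secondary issue, absent from the finite-horizon Theorem \ref{thm2}, is the infinite time horizon: one proves convergence and uniqueness on every $[0,T]$ and patches them consistently on $[0,\infty)$ using the strong dissipativity of $a$.
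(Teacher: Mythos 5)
Your proposal takes the same core route as the paper: view the conditioned dual pair as a Doob $h$-transform of the uncoupled diffusion with drift $-a$, with $h=\theta$, and invoke Girsanov. Your explicit check that $\theta$ is annihilated by the generator $\tfrac12\Delta-a(y_1)\partial_{y_1}-a(y_2)\partial_{y_2}$ and vanishes on the diagonal is precisely what underlies the paper's terser appeal to ``arguments from the proof of Theorem~\ref{thm3}'' and the Clark representation, so there is no difference of substance there. Where you part ways with the paper is in the limit step: the paper derives the SDE only for a fixed interior point $(u,v)\in H$ and then simply asserts that this ``implies equations~\eqref{eq18_1}'' with the degenerate initial datum $(x,x)$, without carrying out the passage $(u,v)\to(x,x)$. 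You correctly identify that this requires the full weak-compactness / Skorokhod / Scheff\'e chain of Section~2, and you correctly flag the two genuine new difficulties it raises relative to Theorem~\ref{thm2} — the non-Gaussian killed transition density needed for a Lemma~\ref{lem1}-type estimate at the singular corner, and the infinite time horizon — neither of which the paper addresses. Your proposal is therefore not merely correct but more complete than the paper's own proof on this point. One small calibration on uniqueness: the $\nabla\log\theta$ contribution is dissipative by log-concavity (Pr\'ekopa applied to $\pi\cdot\1_{\{y_1<x<y_2\}}$), but the $-a$ part contributes $-\sum_i(Y_i-\tilde Y_i)(a(Y_i)-a(\tilde Y_i))\le L\,|Y-\tilde Y|^2$ with $L$ the Lipschitz constant of $a$, so the total drift is one-sided Lipschitz with constant $L$ rather than monotone decreasing; pathwise uniqueness then follows by Gronwall, not by the sign argument alone as in Theorems~\ref{thm3} and~\ref{thm1}.
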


\begin{proof} The dual process $\tilde{\psi}$ is the Arratia flow with drift $-a,$ see \cite{Riabov_duality, DRS}. Let $\{(Y_1(t),Y_2(t))\}_{t\geq 0}$ be a solution of the SDE 
$$
\begin{cases}
dY_1(t)=-a(Y_1(t))dt+dW_1(t), \\
dY_2(t)=-a(Y_2(t))dt+dW_2(t), \\
Y_1(0)=u, Y_2(0)=v
\end{cases}
$$
where $W$ is a standard $\mathbb{R}^2-$valued Brownian motion. The law of the process 
$$
\{(\tilde{\psi}_{0,-t}(u),\tilde{\psi}_{0,-t}(v))\}_{t\geq 0}
$$
conditioned on the event $\{u<\eta_0<v\}$ coincides with the law of the process $Y$ conditioned on the event 
$$
A=\{\forall t\geq 0 \ Y_1(t)<Y_2(t), \lim_{t\to \infty}Y_1(t)=-\infty, \lim_{t\to \infty}Y_2(t)=\infty\}.
$$
Let $\sigma=\inf\{t\geq 0: Y_1(t)=Y_2(t)\}.$ Applying arguments from the proof of Theorem \ref{thm3} to the process 
$$
t\to \theta(Y_1(t\wedge \sigma),Y_2(t\wedge \sigma)),
$$
we get the Clark representation
$$
1_A=\theta(u,v)+\int^\tau_0 (\nabla \theta(Y(s)),dW(s)).
$$
By the Markov property,
$$
\mathbb{E}[1_A|Y(s),s\leq t]=1_{\tau>t}\theta(Y(t)).
$$
The Girsanov theorem implies that with respect to the law of $Y$ conditioned on the event $A,$ the process 
$$
\tilde{W}(t)=W(t)-\int^t_0 \nabla \log\theta(Y(s))ds, \  t\geq 0, 
$$
is a Brownian motion. This implies equations  \ref{eq18_1}  for the distribution of the process   $\{(\tilde{\psi}_{0,-t}(u),\tilde{\psi}_{0,-t}(v))\}_{t\geq 0}$  conditioned on the event $\{u<\eta_0<v\}.$

\end{proof}

\end{document}